  \newtheorem{theorem}{Theorem}[section]
  \newtheorem{corollary}[theorem]{Corollary}
  \newtheorem{lemma}[theorem]{Lemma}
  \newtheorem{remark}[theorem]{Remark}
  \newtheorem{conjeture}{Conjeture}
  \numberwithin{equation}{section}
\def\N{\mathbb{N}}
\def\C{\mathbb{C}}
\def\Z{\mathbb{Z}}
\def\ord{\mathrm{ord}}
\def\G{\mathcal{G}}  
\def\C{\mathcal{C}}
\title{The $\text{t}$-graphs over finitely-generated  groups and the Minkowski metric}
\author{G. Diaz-Porto}
\address{Department of Mathematics and Statistics, Universidad del Norte, Km 5 via a Puerto Colombia, Barranquilla - Colombia}
\email{gabrieladiaz@uninorte.edu.co }
\thanks{ }
\author{I. S. Guti\'errez}
\address{Department of Mathematics and Statistics, Universidad del Norte, Km 5 via a Puerto Colombia, Barranquilla - Colombia}
\email{isgutier@uninorte.edu.co}
\author{A. Torres-Grandisson}
\address{Department of Mathematics and Statistics, Universidad del Norte, Km 5 via a Puerto Colombia, Barranquilla - Colombia}
\email{agrandisson@uninorte.edu.co }
\thanks{ }
\subjclass[2010]{Primary 20F05, 20E65 	; Secondary 05C12, 	05C15}
\date{ }
\keywords{finitely-generated groups; finite groups, $t$-graph, subgraph, connected components, chromatic number}
\begin{document}
	
\maketitle

\begin{abstract}
In this paper, we introduce the $t$-graphs defined on finitely-generate groups. We study some general aspects of the $t$-graphs on 2-generator groups, emphasising establishing necessary conditions for their connectedness.  In particular, we investigate properties of $t$-graphs defined on finite dihedral groups.
\end{abstract}

\tableofcontents

One of the best-known connections between groups and graph theory was presented by A. Cayley, \cite{Cayley}. He gave a group $G$ as a directed graph, where the vertices correspond to elements of $G$ and the edges to multiplication by group generators and their inverses. Such a graph is called a Cayley diagram or Cayley graph of $G$. It is a central tool in combinatorial and geometric group theory. 

Recent works reveal many different ways of associating a graph to a given finite group, most of them inspired by a question posed by P. Erdös \cite{Neumann}. These differences lie in the adjacency criterion used to relate two group elements constituting the set of vertices of such a graph. Some essential authors in this context are A. Abdollahi \cite{Abdollahi}, A. Lucchini \cite{Lucchini 1, Lucchini 2}, and D. Hai-Reuven \cite{Reuven}, among others.

Our notation will be standard, and it is as in \cite{Huppert}, and \cite{Melissa} for groups and graphs. Let $G = \langle g_1,\ldots, g_n\rangle$ be a finitely-generated group, and suppose now that every element in $g\in G$ can be uniquely written as 
\begin{equation}\label{escritura}
	g=\prod_{i=1}^n g_i^{\epsilon_i}.
\end{equation}
To determine a measure of the separation between two elements of $G$ we introduce the following distance map $d_1 : G\times G \longrightarrow \N_0$, defined by 
\begin{equation}\label{d1}
	d_1(g,h)= d_1\Big(\prod_{i=1}^n g_i^{\epsilon_i}, \prod_{i=1}^n g_i^{\delta_i}\Big) = \sum_{i=1}^n |\epsilon_i-\delta_i|.
\end{equation}
The set $G$ endowed with this distance $d$ is a metric space. Note that $d_1$ is just the Minkowski $l_p$ metric for $p=1$. This is also called taxicab distance, Manhattan distance, or grid distance.

G. Diaz  introduced in \cite{Diaz} the $t$-graphs using Minkowski's metric. This graph can be defined by having the group $G$ as the underlying set of vertices and the following adjacency criteria: Let $t$ be an integer number with $1\leq t \leq n$. We say that $g, h\in G$ are adjacent if and only if  $d_1(g,h)= t$.

The simplest example is when $G$ is a cyclic group. 	Let $G=\langle g\rangle$ be a cyclic group with finite order $m$. That is, $G=\{ 1, g, \cdots, g^{m-1} \}$. From \eqref{d1} we have that 
\begin{equation}\label{d-cyclic}
	d_1(g^i, g^j)=|i-j|, \ \ \text{for all $0\leq i, j \leq m-1$}.
\end{equation}
It means that in the $t$-graph of $G$  there exist an edge between $g^i$ and $g^j$  if and only if $|i-j|=t$. Defining  over $G$ the following relation:
\begin{equation}\label{cyclic2}
	g^i\sim g^j \iff i\equiv j \bmod t,
\end{equation}
we have that $\sim$ is an equivalence relation and then we have a partition of $G$ in $t$ classes given by
\begin{equation}\label{cyclic3}
	[g^i]:=\{ g^j\in G\mid j\equiv i \bmod t\},
\end{equation}
where $i\in \{0, 1, \ldots, t-1\}$. Then the $t$-graph of a cyclic group $G$ can be viewed as the union of $t$ connected components, consisting of paths graphs  or isolated points. Consequently for $t\geq 2$ the $t$-graph is non connected and 2-chromatic. The $1$-graph of $G$ is a path graph, and then connected. 

If $t$ is a divisor of the group order $m$, then it is well known that $G$ has a cyclic subgroup $U$ of order $m/t$, and the elements of $U$ form a subgraph with $m/n$ vertices, which is a connected component of the $t$-graph of $G$.

An immediate consequence of this is that if $G$ is a finite abelian group, say  $G= \langle g_1\rangle \times \cdots \times \langle g_n\rangle$, with $\ord(g_j) = \epsilon_j$. Then the 1-graph of $G$ is the  Cartesian product of $n$ paths graphs of lengths $\epsilon_j$ respectively. That is, a  $n$-dimensional square grid graph. In general, using the above example  the $t$-graph of $G$ is the Cartesian product of $t$ components.  

On the other hand, It is well known that the dihedral group $D_n$ and the quaternion group $Q_8$ have the following  group presentation respectively:
\begin{align}\label{diedrico}
	D_n &= \langle a, b \mid a^2 = b^n =1, \ a b a = b^{-1}\rangle,\\
	Q_8 &= \langle a, b \mid a^4 =1, \ a^2 = b^2, \ b a b^{-1}  = a^{-1}\rangle.
\end{align}
Furthermore 
\begin{equation}\label{abelian}
	\Z_2\times \Z_4 = \langle a, b \mid a^2 = b^4= 1, \ ab = ba\rangle.
\end{equation} 
Note that, in terms of their generators, the elements of $\Z_2\times \Z_4$, $D_4$ and $Q_8$ can be written as follows
\begin{equation}
	\{ 1, a, b, b^2, b^3, a b, a b^2, a b^3 \},
\end{equation}
and we have the following distance table:
\begin{table}[H]
	\centering
	\begin{tabular}{|c|c|c|c|c|c|c|c|c|}
		\hline
		$d_1$ & 1 & $a$ & $b$ & $b^2$ & $b^3$ & $a b$ & $a b^2$ & $a b^3$\\
		\hline
		1 & 0 & 1 & 1 & 2 & 3 & 2 & 3 & 4 \\
		$a$ & 1 & 0 & 2 & 3 & 4 & 1 & 2 & 3 \\
		$b$ & 1 & 2 & 0 & 1 & 2 & 1 & 2 & 3 \\
		$b^2$ & 2 & 3 & 1 & 0 & 1 &	2 &	1 & 2 \\
		$b^3$ & 3 &	4 &	2 &	1 &	0 &	3 &	2 &	1 \\
		$a b$ & 2 & 1 & 1 & 2 & 3 & 0 & 1 & 2 \\
		$a b^2$ & 3 & 2 & 2 & 1 & 3 & 1 & 0 & 1 \\
		$a b^3$ & 4 & 3 & 3 & 2 & 1 & 2 & 1 & 0\\
		\hline
	\end{tabular}
	\caption{Table of distances of $\Z_2\times \Z_4$, $D_4$ and $Q_8$}
	\label{tabla d8}
\end{table}

An illustration of the first four $t$-graph of these three groups is presented in the following figure.

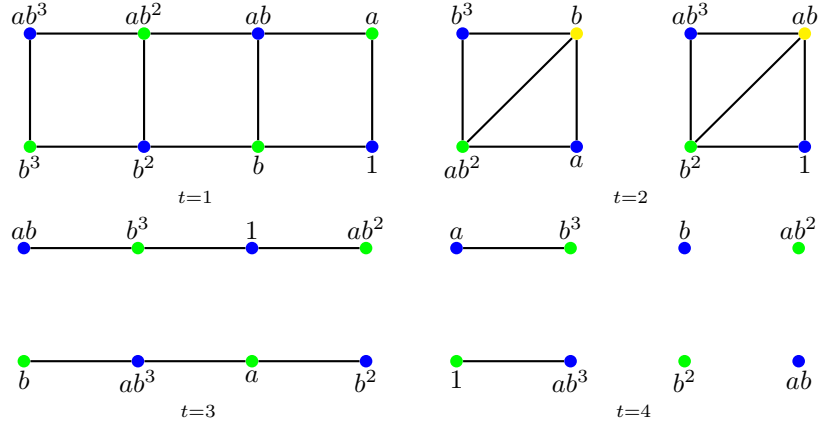
\begin{figure}[H]
	\centering
	$\underset{t=1}{
		\begin{tikzpicture}
			\node[circle,scale=0.5,fill=blue] (1) at (1,0)  {};		
			\node[circle,scale=0.5,fill=green] (a) at (1,1.5)  {};				
			\node[circle,scale=0.5,fill=blue] (ab) at (-0.5,1.5)  {};		
			\node[circle,scale=0.5,fill=green] (ab2) at (-2,1.5)  {};					
			\node[circle,scale=0.5,fill=blue] (ab3) at (-3.5,1.5)  {};			
			\node[circle,scale=0.5,fill=green] (b3) at (-3.5,0)  {}; 		
			\node[circle,scale=0.5,fill=blue] (b2) at (-2,0)  {};		
			\node[circle,scale=0.5,fill=green] (b) at (-0.5,0)    {}; 		 		
			\draw[thick] (1) -- (a) -- (ab) -- (ab2)  -- (ab3) -- (b3)   -- (b2) -- (b) -- (1)	;
			\draw[thick]  (b) -- (ab) 	;
			\draw[thick] (b2)  -- (ab2) 	;
			\node [below] at (1,0) {$1$};
			\node[above] at (1,1.5)  {$a$};
			\node[above] at (-0.5,1.5) {$ab$};
			\node[above] at  (-2,1.5) {$ab^2$};
			\node[above] at (-3.5,1.5) {$ab^3$};
			\node[below] at (-3.5,0) {$b^3$};
			\node[below] at (-2,0) {$b^2$};	
			\node[below] at (-0.5,0)  {$b$};
	\end{tikzpicture}}$
	\hskip0.5cm
	$\underset{t=2}{	
		\begin{tikzpicture}
			\node[circle,scale=0.5,fill=blue] (1) at (1,0)  {};		
			\node[circle,scale=0.5,fill=yellow] (a) at (1,1.5)  {};				
			\node[circle,scale=0.5,fill=blue] (ab) at (-0.5,1.5)  {};		
			\node[circle,scale=0.5,fill=yellow] (ab2) at (-2,1.5)  {};					
			\node[circle,scale=0.5,fill=blue] (ab3) at (-3.5,1.5)  {};			
			\node[circle,scale=0.5,fill=green] (b3) at (-3.5,0)  {}; 		
			\node[circle,scale=0.5,fill=blue] (b2) at (-2,0)  {};		
			\node[circle,scale=0.5,fill=green] (b) at (-0.5,0)    {}; 		 		
			\draw[thick] (1) -- (a) -- (ab)  (ab2)  -- (ab3) -- (b3)   -- (b2)  (ab2) -- (b3) (a)--(b) -- (1)	;
			\draw[thick]  (b) -- (ab) 	;
			\draw[thick] (b2)  -- (ab2) 	;
			\node [below] at (1,0) {$1$};
			\node[above] at (1,1.5)  {$ab$};
			\node[above] at (-0.5,1.5) {$ab^3$};
			\node[above] at  (-2,1.5) {$b$};
			\node[above] at (-3.5,1.5) {$b^3$};
			\node[below] at (-3.5,0) {$ab^2$};
			\node[below] at (-2,0) {$a$};	
			\node[below] at (-0.5,0)  {$b^2$};
	\end{tikzpicture}}$
	
	$\underset{t=3}{
		\begin{tikzpicture}
			\node[circle,scale=0.5,fill=blue] (1) at (1,0)  {};		
			\node[circle,scale=0.5,fill=green] (a) at (1,1.5)  {};				
			\node[circle,scale=0.5,fill=blue] (ab) at (-0.5,1.5)  {};		
			\node[circle,scale=0.5,fill=green] (ab2) at (-2,1.5)  {};					
			\node[circle,scale=0.5,fill=blue] (ab3) at (-3.5,1.5)  {};			
			\node[circle,scale=0.5,fill=green] (b3) at (-3.5,0)  {}; 		
			\node[circle,scale=0.5,fill=blue] (b2) at (-2,0)  {};		
			\node[circle,scale=0.5,fill=green] (b) at (-0.5,0)    {}; 		 		
			\draw[thick]  (a) -- (ab) -- (ab2)  -- (ab3)  (b3)   -- (b2) -- (b) -- (1)	;
			\node [below] at (1,0) {$b^2$};
			\node[below] at (-0.5,0)  {$a$};
			\node[below] at (-2,0) {$ab^3$};	
			\node[below] at (-3.5,0) {$b$};
			\node[above] at (1,1.5)  {$ab^2$};
			\node[above] at (-0.5,1.5) {$1$};
			\node[above] at  (-2,1.5) {$b^3$};
			\node[above] at (-3.5,1.5) {$ab$};
	\end{tikzpicture}}$
	\hskip0.5cm
	$\underset{t=4}{
		\begin{tikzpicture}
			\node[circle,scale=0.5,fill=blue] (1) at (1,0)  {};		
			\node[circle,scale=0.5,fill=green] (a) at (1,1.5)  {};				
			\node[circle,scale=0.5,fill=blue] (ab) at (-0.5,1.5)  {};		
			\node[circle,scale=0.5,fill=green] (ab2) at (-2,1.5)  {};					
			\node[circle,scale=0.5,fill=blue] (ab3) at (-3.5,1.5)  {};			
			\node[circle,scale=0.5,fill=green] (b3) at (-3.5,0)  {}; 		
			\node[circle,scale=0.5,fill=blue] (b2) at (-2,0)  {};		
			\node[circle,scale=0.5,fill=green] (b) at (-0.5,0)    {}; 		 		
			\draw[thick]   (a) (ab) (ab2)  -- (ab3)  (b3)   -- (b2)  (b) (1)	; 
			\node[above] at (-3.5,1.5) {$a$};
			\node[above] at  (-2,1.5) {$b^3$};
			\node[above] at (-0.5,1.5) {$b$};
			\node[above] at (1,1.5)  {$ab^2$};
			\node [below] at (1,0) {$ab$};
			\node[below] at (-0.5,0)  {$b^2$};
			\node[below] at (-2,0) {$ab^3$};	
			\node[below] at (-3.5,0) {$1$};
	\end{tikzpicture}}$
	\caption{The $t$-graphs of $\Z_2\times \Z_4$,  $D_4$ and $Q_8$.}
	\label{D8}
\end{figure}

Despite being non-isomorphic groups, these groups have precisely the same $t$-graphs since the metric used to define the adjacency criterion only considers the writing of the group's elements and not how they interact with each other. This leads to the conclusion that any 2-generator group $G =  \langle a, b \rangle$ in which every element can be written in the form $a^ib^j$, with $0\leq i\leq \ord(a)-1$ and $0\leq j\leq \ord(b)-1$ has the same $t$-graphs as the group $\Z_{\ord(a)}\times \Z_{\ord(b)}$, since when considering the form its elements are written in terms of the generators, the underlying sets are the same.

Therefore, in principle, to study the $t$-graphs of a group $G$, it is sufficient to consider abelian groups, expressed as products of cyclic groups. Naturally, this implies asking oneself, given an arbitrary group $G$, how to determine the abelian group with which it will share the same $t$-graphs. For example, the symmetric group of degree 5 has the same $t$-graph as $\Z_2\times \Z_3\times \Z_4\times \Z_5$.

On the other hand, this situation opens the possibility of studying $t$-graphs  by defining the adjacency criterion in terms of another metric. This change could imply that the group structure plays a more critical role. 

The main goal of this paper is to obtain some characterizations of the $t$-graphs $\G$ associated with 2-generator group $G$ that can be expressed in the form 
\begin{equation}\label{2-gen}
	G =  \langle a, b \rangle = \{a^ib^j\mid 0\leq i\leq m, \ \ 0\leq j\leq n\}.
\end{equation}
where $m\leq \ord(a)$ y $n\leq \ord(b)$; $n, m\in \Z$. These numbers $m$ and $n$ depend exclusively on the structure, namely on the group's presentation and the order of $G$.
We concretely determine the number of connected components of $\G$ depending on whether $t$ is an even or odd number.

\section{Preliminaries on t-graphs}

\begin{lemma}
	Let $G=\langle g_1, \cdots, g_n \rangle$ be a finitely generated group and $H\leq G$. Then the $t$-graph of $H$ is a subgraph of the $t$-graph of  $G$.
\end{lemma}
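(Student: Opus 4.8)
The plan is to establish the containment simultaneously at the level of vertices and of edges, so that the $t$-graph of $H$ sits inside the $t$-graph of $G$ as the subgraph induced on the vertex set $H$. First I would pin down the ambient data that makes the statement meaningful. Since $G=\langle g_1,\dots,g_n\rangle$ admits the unique normal form \eqref{escritura}, every element of $H$ is in particular an element of $G$ and therefore inherits its unique expression $\prod_{i=1}^n g_i^{\epsilon_i}$ in terms of the generators of $G$. The distance $d_1$ in \eqref{d1} is evaluated entirely from these expressions, so the restriction $d_1|_{H\times H}$ is literally the same function used to build the $t$-graph of $G$ on the pairs coming from $H$. Fixing this representation is the whole point: the $t$-graph of $H$ is to be read using the normal forms of $G$, not a fresh normal form arising from an intrinsic generating set of $H$.

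With that identification in hand, the two inclusions are direct. The vertex inclusion is immediate, since $H\leq G$ gives $V(H\text{-graph})=H\subseteq G=V(G\text{-graph})$. For the edges I would argue straight from the adjacency criterion: two elements $h_1,h_2\in H$ are joined in the $t$-graph of $H$ exactly when $d_1(h_1,h_2)=t$, and because $h_1,h_2$ are also vertices of the $t$-graph of $G$ whose $d_1$-distance is computed from the very same normal forms, that distance again equals $t$ there. Hence every edge of the $t$-graph of $H$ is an edge of the $t$-graph of $G$, and together with the vertex inclusion this yields that the former is a subgraph of the latter (in fact the induced subgraph on $H$).

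The step deserving the most care is the first one, and it is really the only subtle point: one must make precise that the normal form of an element is unchanged when one passes from $G$ to the subgroup $H$, so that $d_1$ is inherited rather than merely analogous. This is exactly what forces the statement to be read with the representations of $G$; if instead one used an intrinsic normal form coming from a generating set of $H$, the distance would rescale and the conclusion could fail. Once the representations are fixed to be those of $G$, no genuine computation remains, and the proof reduces to restricting a metric to a subset and reading off the resulting adjacency relation.
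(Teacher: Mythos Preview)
Your argument is correct and is exactly the unfolding of what the paper records in one line (``It follows immediately from the definition of $t$-graph''): you make explicit the vertex inclusion $H\subseteq G$ and observe that adjacency is determined by the same $d_1$ computed from the normal forms of $G$. Your added remark that the $t$-graph of $H$ must be read through the representations of $G$ rather than an intrinsic generating set of $H$ is a useful clarification the paper leaves implicit.
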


\begin{proof} 
	It follows immediately from the definition of $t$-graph.
\end{proof}

\begin{lemma}\label{isomorfia}
	Let $G=\langle g_1, \cdots, g_n\rangle$ and $H=\langle h_1, \cdots, h_n\rangle$ be finitely generated groups. If $G$ and $H$ are isomorphic, then  the corresponding $t$-graphs are isomorphic, for all natural number $t$.
\end{lemma}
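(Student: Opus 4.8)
The plan is to realise the required graph isomorphism as the very map induced by the group isomorphism, once it is made compatible with the labelling of generators. So first I would fix an isomorphism $\phi \colon G \to H$ and, since the two generating tuples are indexed by the same set $\{1,\dots,n\}$, take $\phi$ to satisfy $\phi(g_i) = h_i$ for every $i$; this is the natural reading that makes the statement meaningful, because the $t$-graph is built from the coordinate expression \eqref{escritura} and therefore depends on the chosen generators together with their order. Under this hypothesis one has $\ord(g_i) = \ord(h_i)$ automatically, so the ranges of admissible exponents in $G$ and in $H$ coincide.

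The central step is to check that $\phi$ is an isometry from $(G,d_1)$ to $(H,d_1)$. Given $g = \prod_{i=1}^n g_i^{\epsilon_i}$, the homomorphism property together with $\phi(g_i)=h_i$ yields $\phi(g) = \prod_{i=1}^n h_i^{\epsilon_i}$. Because $\phi$ is a bijection and each element of $G$ has a unique expression of the form \eqref{escritura}, this image expression is exactly the unique normal form of $\phi(g)$ in $H$, so the exponent vector $(\epsilon_1,\dots,\epsilon_n)$ is transported unchanged. Consequently, for $g = \prod_i g_i^{\epsilon_i}$ and $g' = \prod_i g_i^{\delta_i}$ the defining formula \eqref{d1} gives
\[
d_1(\phi(g),\phi(g')) = \sum_{i=1}^n |\epsilon_i-\delta_i| = d_1(g,g').
\]

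Finally I would convert this isometry into a graph isomorphism. Since $\phi$ is a bijection of the vertex sets with $d_1(\phi(g),\phi(g')) = d_1(g,g')$, we get $d_1(g,g') = t \iff d_1(\phi(g),\phi(g')) = t$, so $\phi$ carries edges of the $t$-graph of $G$ bijectively onto edges of the $t$-graph of $H$ and preserves non-adjacency; this is precisely a graph isomorphism, and it holds for every $t$ at once.

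I expect the only real obstacle to be the bookkeeping around the normal form: one must verify carefully that the uniqueness of the expression \eqref{escritura} in $G$ forces the corresponding uniqueness in $H$, and that $\phi$ genuinely sends the $G$-normal form of $g$ to the $H$-normal form of $\phi(g)$ with identical exponents rather than to some other factorisation. Everything after that point is a routine consequence of $d_1$ depending only on the exponent vector.
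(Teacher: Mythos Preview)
Your proof is correct and follows essentially the same route as the paper: one takes a group isomorphism $\phi$ with $\phi(g_i)=h_i$, observes that it preserves the exponent vectors in the normal form \eqref{escritura} and hence the Minkowski distance $d_1$, and concludes that $\phi$ is a graph isomorphism for every $t$. If anything, you are more careful than the paper about the uniqueness of the normal form and about checking both directions of the adjacency preservation.
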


\begin{proof} 
	Let $f: G \longrightarrow H$ be a group isomorphism with $f(g_i)=h_i$, and let   $\G=(G, E_1)$ and $\mathcal{H}=(H,E_2)$ be the corresponding $t$-graphs of $G$ and  $H$ respectively. Let suppose  $\{x, y\}\in E_1$, with $x= \prod_{i=1}^n g_i^{\epsilon_i}$ and $y= \prod_{i=1}^n g_i^{\delta_i}$. Then $d_1(x,y) = t$, and we have
	\begin{align*}
		d_1(f(x), f(y))  & = d_1\Big(\prod_{i=1}^n f(g_i)^{\epsilon_i}, \prod_{i=1}^n f(g_i)^{\delta_i}\Big)\\
		& = d_1\Big(\prod_{i=1}^n h_i^{\epsilon_i}, \prod_{i=1}^n h_i^{\delta_i})\Big)\\
		&=\displaystyle{\sum_{i=1}^n |\epsilon_i-\delta_i|}\\
		&= d_1(x,y).
	\end{align*}	
	It follows that  $\{f(x), f(y)\}\in E_2$.
\end{proof}

\begin{remark}
	Note that the reciprocal of the statement in lemma \ref{isomorfia} is in general not true. For example, $t$-graphs of the dihedral $D_4$ and the quaternions group, $Q_8$ are isomorphic even though $D_4 \not\cong Q_8$. 
\end{remark}

To study $t$-graphs in the given context, it is necessary to use the spectral theory of graphs, which consists of studying the properties of the Laplacian  matrix of a graph, more specifically its eigenvalues and eigenvectors.  

The Laplacian  matrix of $\G =(V,E)$  is the $n\times  n$ matrix $L = (l_{ij})$ indexed by $V$, whose $(i,j)$-entry is defined as follows
\begin{equation}
	l_{ij} = \begin{cases}
		-1 & \text{if} \ \{v_i, v_j\} \in E\\
		\deg(v_i) & \text{if} \ i=j\\
		0 & \text{otherwise}.
	\end{cases}
\end{equation}

To analyse the behaviour of the number of connected components $k(\G)$ of the t-graphs defined on a group $G$, we use the following Theorem, which allows us to realise the tables  \ref{TF1} and \ref{TF2}. A proof of this Theorem can be found in \cite[Theorem 7.1]{Nica}. 

\begin{theorem}\label{T1}
	A graph $\G$ has $k$ connected components if and only if the algebraic multiplicity of $0$ as a Laplacian eigenvalue is $k$.
\end{theorem}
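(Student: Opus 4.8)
The plan is to identify the algebraic multiplicity of the eigenvalue $0$ with the dimension of the kernel of the Laplacian matrix $L$, and then to show that this kernel consists precisely of the vectors on $V$ that are constant on each connected component of $\G$. Since $L = D - A$ (the diagonal degree matrix minus the adjacency matrix) is a real symmetric matrix, it is orthogonally diagonalizable, so the algebraic and geometric multiplicities of every eigenvalue coincide; in particular the algebraic multiplicity of $0$ equals $\dim \ker L$. This reduces the theorem to computing $\dim \ker L$.

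First I would record the quadratic form associated with $L$. For any vector $x = (x_v)_{v \in V} \in \R^{|V|}$, a direct computation that splits the contribution of the diagonal degree term from that of the off-diagonal adjacency entries gives the identity
\begin{equation*}
	x^{T} L x = \sum_{\{u,v\}\in E} (x_u - x_v)^2 .
\end{equation*}
In particular $L$ is positive semidefinite, so every eigenvalue is nonnegative and, crucially, $Lx = 0$ if and only if $x^{T} L x = 0$.

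Next I would characterize the kernel. From the identity above, $x \in \ker L$ holds exactly when $(x_u - x_v)^2 = 0$ for every edge $\{u,v\}\in E$, that is, when $x_u = x_v$ for all adjacent vertices. Propagating this equality along paths within a component, $x$ must be constant on each connected component of $\G$; conversely, any such locally constant vector clearly annihilates every summand, hence lies in $\ker L$. Thus $\ker L$ is precisely the space of vectors on $V$ that are constant on each of the $k$ connected components.

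Finally I would count dimensions. Writing $V = C_1 \cup \cdots \cup C_k$ for the partition into connected components, the indicator vectors $\mathbf{1}_{C_1}, \ldots, \mathbf{1}_{C_k}$ are supported on disjoint sets, hence linearly independent, and by the previous step they span $\ker L$. Therefore $\dim \ker L = k$, and combining this with the equality of algebraic and geometric multiplicities for symmetric matrices yields that the algebraic multiplicity of $0$ equals the number of connected components. The only genuinely substantive step is the kernel characterization, where connectivity is what upgrades the edgewise constancy forced by $x^{T} L x = 0$ to global constancy on each component; the remaining steps are routine linear algebra.
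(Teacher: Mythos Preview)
Your argument is correct and is the standard spectral proof of this classical fact. The paper itself does not supply a proof of this theorem at all; it merely quotes the statement and cites \cite[Theorem~7.1]{Nica} for a proof, so there is no in-paper argument to compare against---your write-up is precisely the kind of proof one finds in that reference.
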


In the following, to study the $t$-graphs associated with a finite group $G$, we will consider only finite 2-generator groups, which can be expressed in the form \eqref{2-gen}. These numbers $m$ and $n$ depend exclusively on the structure, namely on the group's presentation and the order of $G$.

Let $G$ be  such a group.  To observe the behaviour of the number of connected components $k(\G)$ of a $t$-graph $\G$ determined by a group $G$ we make use of Theorem \ref{T1}, with which we were able to make the following tables: 

\begin{table}[H]
	\begin{center}
		\resizebox{13cm}{!}{	\begin{tabular}{|c|c|c|c|c|c|c|c|c|c|c|c|c|c|c|c|c|c|c|c|c|}
				\hline
				\textbf{$n$\textbackslash $t$} & 1 & 2 & 3 & 4 & 5 & 6  & 7  & 8  & 9  & 10 & 11 & 12 & 13 & 14 & 15 & 16 & 17 & 18 & 19 & 20   \\ 
				\hline
				2                 & 1 & 2 & - & - & - & -  & -  & -  & -  & -  & -  & -  & -  & -  & -  & -  & -  & -  & -  & -   \\ \hline
				3                 & 1 & 2 & 4 & - & - & -  & -  & -  & -  & -  & -  & -  & -  & -  & -  & -  & -  & -  & -  & -     \\ \hline
				4                 & 1 & 2 & 2 & 6 & - & -  & -  & -  & -  & -  & -  & -  & -  & -  & -  & -  & -  & -  & -  & -     \\ \hline
				5                 & 1 & 2 & 1 & 4 & 8 & -  & -  & -  & -  & -  & -  & -  & -  & -  & -  & -  & -  & -  & -  & -       \\ \hline
				6                 & 1 & 2 & 1 & 2 & 6 & 10 & -  & -  & -  & -  & -  & -  & -  & -  & -  & -  & -  & -  & -  & -      \\ \hline
				7                 & 1 & 2 & 1 & 2 & 4 & 8  & 12 & -  & -  & -  & -  & -  & -  & -  & -  & -  & -  & -  & -  & -      \\ \hline
				8                 & 1 & 2 & 1 & 2 & 2 & 6  & 10 & 14 & -  & -  & -  & -  & -  & -  & -  & -  & -  & -  & -  & -      \\ \hline
				9                 & 1 & 2 & 1 & 2 & 1 & 4  & 8  & 12 & 16 & -  & -  & -  & -  & -  & -  & -  & -  & -  & -  & -      \\ \hline
				10                & 1 & 2 & 1 & 2 & 1 & 2  & 6  & 10 & 14 & 18 & -  & -  & -  & -  & -  & -  & -  & -  & -  & -      \\ \hline
				11                & 1 & 2 & 1 & 2 & 1 & 2  & 4  & 8  & 12 & 16 & 20 & -  & -  & -  & -  & -  & -  & -  & -  & -      \\ \hline
				12                & 1 & 2 & 1 & 2 & 1 & 2  & 2  & 6  & 10 & 14 & 18 & 22 & -  & -  & -  & -  & -  & -  & -  & -       \\ \hline
				13                & 1 & 2 & 1 & 2 & 1 & 2  & 1  & 4  & 8  & 12 & 16 & 20 & 24 & -  & -  & -  & -  & -  & -  & -         \\ \hline
				14                & 1 & 2 & 1 & 2 & 1 & 2  & 1  & 2  & 6  & 10 & 14 & 18 & 22 & 26 & -  & -  & -  & -  & -  & -         \\ \hline
				15                & 1 & 2 & 1 & 2 & 1 & 2  & 1  & 2  & 4  & 8  & 12 & 16 & 20 & 24 & 28 & -  & -  & -  & -  & -         \\ \hline
				16                & 1 & 2 & 1 & 2 & 1 & 2  & 1  & 2  & 2  & 6  & 10 & 14 & 18 & 22 & 26 & 30 & -  & -  & -  & -          \\ \hline
				17                & 1 & 2 & 1 & 2 & 1 & 2  & 1  & 2  & 1  & 4  & 8  & 12 & 16 & 20 & 24 & 28 & 32 & -  & -  & -       \\ \hline
				18                & 1 & 2 & 1 & 2 & 1 & 2  & 1  & 2  & 1  & 2  & 6  & 10 & 14 & 18 & 22 & 26 & 30 & 34 & -  & -        \\ \hline
				19                & 1 & 2 & 1 & 2 & 1 & 2  & 1  & 2  & 1  & 2  & 4  & 8  & 12 & 16 & 20 & 24 & 28 & 32 & 36 & -          \\ \hline
				20                & 1 & 2 & 1 & 2 & 1 & 2  & 1  & 2  & 1  & 2  & 2  & 6  & 10 & 14 & 18 & 22 & 26 & 30 & 34 & 38       \\ \hline
		\end{tabular}}
	\end{center}
	\caption{Number of connected components of the $t$-graphs on $\Z_n\times \Z_2$.}
	\label{TF1}
\end{table}

\begin{table}[H]
	\begin{center}
		\resizebox{13cm}{!}{		\begin{tabular}{|c|c|c|c|c|c|c|c|c|c|c|c|c|c|c|c|c|c|c|c|c|}
				\hline
				\textbf{$n$\textbackslash $t$} & 1 & 2 & 3 & 4 & 5  & 6  & 7  & 8  & 9  & 10 & 11 & 12 & 13 & 14 & 15  & 16  & 17  & 18  & 19  & 20   \\ 
				\hline
				2                 & 1 & 2 & 4 & - & -  & -  & -  & -  & -  & -  & - & -  & -  & -  & -  & -  & -  & - & -  & -     \\ \hline
				3                 & 1 & 2 & 2 & 7 & -  & -  & -  & -  & -  & -  & - & -  & -  & -  & -  & -  & -  & - & -  & -    \\ \hline
				4                 & 1 & 2 & 1 & 4 & 10 & -  & -  & -  & -  & -  & - & -  & -  & -  & -  & -  & -  & - & -  & -  \\ \hline
				5                 & 1 & 2 & 1 & 3 & 7  & 13 & -  & -  & -  & -  & - & -  & -  & -  & -  & -  & -  & - & -  & -    \\ \hline
				6                 & 1 & 2 & 1 & 2 & 4  & 10 & 16 & -  & -  & -  & - & -  & -  & -  & -  & -  & -  & - & -  & -     \\ \hline
				7                 & 1 & 2 & 1 & 2 & 2  & 7  & 13 & 19 & -  & -  & - & -  & -  & -  & -  & -  & -  & - & -  & -    \\ \hline
				8                 & 1 & 2 & 1 & 2 & 1  & 4  & 10 & 16 & 22 & -  & - & -  & -  & -  & -  & -  & -  & - & -  & -    \\ \hline
				9                 & 1 & 2 & 1 & 2 & 1  & 3  & 7  & 13 & 19 & 25 & - & -  & -  & -  & -  & -  & -  & - & -  & -   \\ \hline
				10                & 1 & 2 & 1 & 2 & 1  & 2  & 4  & 10 & 16 & 22 & 28 & -  & -  & -  & -  & -  & -  & - & -  & -    \\ \hline
				11                & 1 & 2 & 1 & 2 & 1  & 2  & 2  & 7  & 13  & 19  & 25 & 31  & -  & -  & -  & -  & -  & - & -  & -     \\ \hline
				12                & 1 & 2 & 1 & 2 & 1  & 2  & 1  & 4  & 10  & 16  & 22 & 28  & 34  & -  & -  & -  & -  & - & -  & -    \\ \hline
				13                & 1 & 2 & 1 & 2 & 1  & 2  & 1  & 3  & 7  & 13  & 19 & 25  & 31  & 37  & -  & -  & -  & - & -  & -     \\ \hline
				14                & 1 & 2 & 1 & 2 & 1  & 2 & 1  & 2  & 4  & 10  & 16 & 22  & 28  & 34  & 40  & -  & -  & - & -  & -    \\ \hline
				15                & 1 & 2 & 1 & 2 & 1  & 2 & 1 & 2  & 2  & 7  & 13 & 19  & 25  & 31  & 37  & 43  & -  & - & -  & -    \\ \hline
				16                & 1 & 2 & 1 & 2 & 1  & 2  & 1 & 2 & 1  & 4  & 10 & 16  & 22  & 28  & 34  & 40  & 46  & - & -  & -    \\ \hline
				17                & 1 & 2 & 1 & 2 & 1  & 2  & 1 & 2 & 1 & 3  & 7 & 13  & 19  & 25  & 31  & 37  & 43  & 49 & -  & -   \\ \hline
				18                & 1 & 2 & 1 & 2 & 1  & 2  & 1  & 2 & 1 & 2 & 4 & 10  & 16  & 22  & 28  & 34  & 40  & 46 & 52  & -    \\ \hline
				19                & 1 & 2 & 1 & 2 & 1  & 2  & 1  & 2 & 1 & 2 & 2 & 7  & 13  & 19  & 25  & 31  & 37  & 43 & 49  & 55  \\ \hline
				20                & 1 & 2 & 1 & 2 & 1  & 2  & 1  & 2 & 1 & 2 & 1 & 4  & 10  & 16  & 22  & 28  & 34  & 40 & 46  & 52   \\ \hline
		\end{tabular}}
	\end{center}
	\caption{Number of connected components of the $t$-graphs on $\Z_n\times \Z_3$.}
	\label{TF2}
\end{table}

Note in the previous tables  that $k(\G)$ has a regular behaviour up to a certain value of $t$ where if $t$ is even $k(\G) = 2$ and if t is odd then $k(\G) = 1$ and from this value onwards $k(\G)$ has a value with no definite pattern.  When $m = 2$, (for example by dihedral groups) the unstable  behaviour of the matrix entry when $t > \left\lceil \frac{m+n-2}{2} \right\rceil$ shows a pattern in which the number of connected components of the $t$-graph increases by 4.  On the other hand, for $m=3$ the non regular part starts with 7 connected components  and so progresses from 6 to 6 if $n$ is odd, and starts at 4 and progresses from 6 to 6 when $n$ is even.

This fact leads us to state the following theorem, which allows us to have a first characterisation of the $t$-graphs associated with 2-generate groups in the form \eqref{2-gen} concerning the number of connected components.

\begin{lemma}
	Let $G$ be a 2-generator group in the form \eqref{2-gen} with  $n, m \geq 2$, and $\G $ the corresponding $t$-graph of $G$. Then $\mathcal{G}$ has no isolated points if and only if $t\leq \left\lceil \frac{m+n-2}{2} \right\rceil$.
\end{lemma}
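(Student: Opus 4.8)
The plan is to strip away the group structure entirely and work with the integer lattice. By the uniqueness of the expression $a^{i}b^{j}$, the vertex set of $\mathcal{G}$ is in bijection with the lattice box $B=\{(i,j)\in\Z^{2}: 0\le i\le m-1,\ 0\le j\le n-1\}$ (the $m$ powers of $a$ against the $n$ powers of $b$), and under this bijection $d_{1}$ becomes the $\ell_{1}$-distance $|i-i'|+|j-j'|$. A vertex is isolated in $\mathcal{G}$ exactly when no point of $B$ sits at $\ell_{1}$-distance $t$ from it, so ``$\mathcal{G}$ has no isolated points'' is equivalent to: for every $(i,j)\in B$ some point of $B$ is at distance exactly $t$. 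The whole question is therefore metric.

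The first key step I would prove is that, from any $(i,j)\in B$, the set of $\ell_{1}$-distances actually realized by the other points of $B$ is the full interval $\{0,1,\dots,e(i,j)\}$, where the eccentricity is $e(i,j)=\max(i,m-1-i)+\max(j,n-1-j)$. The bound $\le e(i,j)$ is the definition of eccentricity; contiguity follows by moving one unit at a time toward the farthest corner, which changes the distance to $(i,j)$ by exactly $1$ at each step and stays inside $B$, so every intermediate value is attained. Hence $(i,j)$ is non-isolated iff $t\le e(i,j)$, and $\mathcal{G}$ has no isolated points iff $t\le r:=\min_{(i,j)\in B}e(i,j)$, the radius of the box. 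This reduces the lemma to computing $r$ and comparing it with the stated bound.

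Computing $r$ is easy because $e$ splits as a sum over the two coordinates, so $r=\min_{i}\max(i,m-1-i)+\min_{j}\max(j,n-1-j)$; each one-variable minimum is attained at the middle index and equals $\lceil(m-1)/2\rceil$, respectively $\lceil(n-1)/2\rceil$, giving $r=\lceil\tfrac{m-1}{2}\rceil+\lceil\tfrac{n-1}{2}\rceil$. It remains to reconcile this with the claimed threshold $\lceil\tfrac{m+n-2}{2}\rceil$, which is precisely $\lceil D/2\rceil$ for the diameter $D=(m-1)+(n-1)=m+n-2$ of $B$. This comparison is the crux, and I expect it to be the main obstacle: since $\lceil x\rceil+\lceil y\rceil$ and $\lceil x+y\rceil$ coincide only when not both summands are half-integers, the identity $r=\lceil\tfrac{m+n-2}{2}\rceil$ must be checked by a parity case-analysis on $m$ and $n$, the delicate case being $m,n$ both even, where the sum of ceilings can exceed the ceiling of the sum and one must argue carefully that the radius still matches the displayed expression.

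Finally I would assemble the two implications from the reduction and the value of $r$: if $t\le r$ then every vertex has eccentricity $\ge r\ge t$ and hence a neighbour, so $\mathcal{G}$ has no isolated point; conversely, if $t>r$, a central vertex $(i_{0},j_{0})$ with $i_{0}=\lfloor\tfrac{m-1}{2}\rfloor$, $j_{0}=\lfloor\tfrac{n-1}{2}\rfloor$ realizes $e(i_{0},j_{0})=r<t$, so all other vertices lie at distance $<t$ and it is isolated. The hypothesis $m,n\ge 2$ ensures $B$ has enough points for the eccentricities and the centre to behave as described, completing the equivalence once the ceiling identity of the previous paragraph is settled.
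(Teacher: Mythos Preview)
Your reduction to the lattice box $B$ and the eccentricity/radius argument are correct and considerably cleaner than the paper's proof, which only sketches one direction and contains an unjustified step (from ``$t-s\le n-1$ for some $s$'' the paper jumps to ``$t\le n-1$'', which does not follow). Your computation $r=\lceil\tfrac{m-1}{2}\rceil+\lceil\tfrac{n-1}{2}\rceil$ for the radius of $B$ is right, as is your reduction of the lemma to the identity $r=\lceil\tfrac{m+n-2}{2}\rceil$.

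The genuine gap is precisely the one you flagged but hoped to ``argue carefully'' around: when $m$ and $n$ are both even the identity \emph{fails}, since then $m-1$ and $n-1$ are both odd and
\[
\Bigl\lceil\tfrac{m-1}{2}\Bigr\rceil+\Bigl\lceil\tfrac{n-1}{2}\Bigr\rceil=\tfrac{m}{2}+\tfrac{n}{2}=\tfrac{m+n}{2}=\Bigl\lceil\tfrac{m+n-2}{2}\Bigr\rceil+1.
\]
So in that case $r$ exceeds the stated threshold by one, and the lemma as written is false. Concretely, for $m=n=2$ the $2$-graph of $\Z_2\times\Z_2$ consists of the two diagonals and has no isolated point, yet $\lceil\tfrac{m+n-2}{2}\rceil=1<2$. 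Likewise for $m=2$, $n=4$ the $3$-graph (drawn in the paper's own Figure~\ref{D8}) has no isolated vertices although the stated bound is $\lceil 4/2\rceil=2<3$. Your argument therefore proves the correct statement---no isolated points iff $t\le\lceil\tfrac{m-1}{2}\rceil+\lceil\tfrac{n-1}{2}\rceil$---and shows that the paper's bound is sharp only when $m$ and $n$ are not both even; you should not try to close the parity gap, because it is a counterexample rather than a technicality.
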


\begin{proof}
	Let $x=a^ib^j,  y=a^kb^l \in G$ with 
	\begin{equation}\label{isolated}
		d_1(x,y) = |i-k| + |j-l| =t.
	\end{equation}
	Then $t\in \{0, \ldots, m+n-2\}$, and suppose $|i-k| = s\in \{0, \ldots, m-1\}$. This implies that $|j-l| = t-s\in \{0, \ldots, n-1\}$. Note that if $t-s>n-1$ the  equality \eqref{isolated}  is not verified. That is, there is no an edge between $x$ and $y$.  Then, in order not to have isolated points, it must be fulfilled that $t-s\leq n-1$, with  $s\in \{0, \ldots, m-1\}$. Moreover, $t\leq n-1$. Analogously, it follows that $t\leq m-1$. Consequently, $2t\leq m+n-2$, and therefore $t\leq \left\lceil \frac{m+n-2}{2} \right\rceil$.  
\end{proof}

\begin{theorem}\label{T2}
	Let $G$ be a 2-generator group in the form \eqref{2-gen} with  $n, m \geq 2$, and $\G = (G,E)$ the corresponding $t$-graph, with $t\leq \left\lceil \frac{m+n-2}{2} \right\rceil$. 
	\begin{enumerate}[(1)]
		\item If $t$ is an even number, then  $k(\G)=2$. 
		\item If $t$ is an odd number, then $\G$ is connected.
	\end{enumerate}
\end{theorem}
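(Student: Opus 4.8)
The plan is to work throughout with the identification, already justified in the introduction, of $\G$ with the graph on the integer rectangle $R=\{0,\dots,m-1\}\times\{0,\dots,n-1\}$ in which $(i,j)$ and $(k,l)$ are adjacent precisely when $|i-k|+|j-l|=t$. The engine of the whole argument is the parity function $c(i,j)=(i+j)\bmod 2$. Since $|x|\equiv x \pmod 2$ for every integer $x$, along any edge one has $(i+j)-(k+l)=(i-k)+(j-l)\equiv |i-k|+|j-l|=t \pmod 2$; hence every edge changes $c$ by $t\bmod 2$. If $t$ is even, $c$ is constant on each connected component, so the two classes $c^{-1}(0)$ and $c^{-1}(1)$ (both nonempty, as $m,n\ge 2$ forces $(0,0)$ and $(1,0)$ to lie in $R$) cannot be joined and $k(\G)\ge 2$; if $t$ is odd, $\G$ is bipartite with parts $c^{-1}(0),c^{-1}(1)$.

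Both statements of the theorem then reduce to a single claim: \emph{each parity class $c^{-1}(\varepsilon)$ induces a connected subgraph of $\G$}, in the sense that any two vertices of the same parity are joined by a path in $\G$. Granting this, the even case is immediate: the two classes are internally connected and, by the parity computation, no edge runs between them, so $k(\G)=2$. For the odd case I combine internal connectivity with the preceding lemma: since $t\le \lceil (m+n-2)/2\rceil$, the graph $\G$ has no isolated vertex, so at least one edge exists, and as $t$ is odd that edge joins the two classes; both classes being internally connected, $\G$ is connected.

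It remains to prove the claim, and this is where the hypothesis on $t$ and the main difficulty lie. The idea is that within a parity class one navigates by \emph{local moves} — the diagonal steps $(i,j)\to(i\pm1,j\pm1)$ and the double steps $(i,j)\to(i\pm2,j),(i,j\pm2)$ — each realised by a length-two path in $\G$; for instance $(i,j)\to(i+t,j)\to(i+1,j+1)$ realises a $(+1,+1)$ step, both of its edges having $d_1$-length $t$. These moves generate exactly the displacements of even $\ell_1$-norm, so in the unbounded grid they connect every pair of same-parity points while remaining inside one parity class. The genuine obstacle is the boundary of $R$: a given realisation may leave the rectangle, so for each vertex I must select, among the several symmetric two-edge routes (splitting the budget $t$ between the two coordinates and choosing at each step the direction pointing into $R$), one that stays inside. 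The hypothesis $t\le\lceil (m+n-2)/2\rceil$, i.e.\ essentially $2t\le m+n-2$, is precisely the amount of room guaranteeing such an admissible route always exists: because the step budget $t$ may be shared between the two axes, it is the total side-length $m+n$ that controls feasibility, which is why no separate bound on $m$ or $n$ is needed. I expect the careful case analysis of these boundary realisations — verifying that an in-grid route survives near every edge and corner of $R$, and that the surviving moves still suffice to connect each class — to be the technical heart of the proof; the tables suggest the threshold is sharp, since the regular behaviour $k(\G)\in\{1,2\}$ degenerates as soon as $t$ exceeds it.
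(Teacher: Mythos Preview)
Your parity framework $c(i,j)=(i+j)\bmod 2$ is exactly the device the paper uses: it defines $V_1=\{a^ib^j:i+j\equiv 0\bmod 2\}$ and $V_2=\{a^ib^j:i+j\equiv 1\bmod 2\}$ and, for $t$ even, asserts these are the two components; for $t$ odd it argues that every vertex has a neighbour of the opposite parity. So structurally your proposal and the paper's proof coincide.

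Where they differ is in honesty about the central gap. You have correctly isolated the real difficulty: one must show that each parity class is \emph{internally connected} by paths in $\G$. The paper does not do this. In part~(1) it writes ``It is clear that $V_1\cup V_2=G$, and then $k(\G)=2$'', which only yields $k(\G)\ge 2$; in part~(2) it shows each vertex has a cross-parity neighbour, which is far weaker than connectedness. Your local-move strategy (realising diagonal and double steps as two-edge paths in $\G$, with the intermediate point chosen so as to remain inside the rectangle) is the natural way to fill this gap and is a genuine addition beyond what the paper supplies.

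That said, your proposal is still a plan rather than a proof: the boundary case analysis you flag as the ``technical heart'' is not carried out. It is not entirely routine, since near a corner both coordinates are constrained simultaneously and the existence of an admissible two-edge route for, say, the $(+1,+1)$ move requires choosing carefully among the four sign patterns and the split of $t$ between the axes; the full hypothesis $t\le\lceil(m+n-2)/2\rceil$ is needed here, and you should also verify that the moves which survive near the boundary still suffice to connect the whole class. In summary: same approach as the paper, sharper diagnosis of the missing step, but that step remains to be executed.
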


\begin{proof}
	From the above lemma we have that the condition $t\leq \left\lceil \frac{m+n-2}{2} \right\rceil$  implies that $\G$ has no isolated points. We now differentiate two possible cases.
	\begin{enumerate}[(1)]
		\item Let $t$ be an even number. We define $\C_1 = (V_1, E_1)$ and $\C_2 = (V_2, E_2)$ subgraph of $\G$, as follows:
		\begin{align}
			V_1 & := \{a^ib^j \mid  i+j \equiv 0 \bmod 2 \},\\
			E_1	& := \{\{a^ib^j, a^kb^l\} \mid i+j, k+l \equiv 0 \bmod 2 \wedge \ |i-k|+|j-l|=t \},
		\end{align}
		and 
		\begin{align}
			V_2 & := \{a^ib^j\mid  i+j\equiv 1 \bmod 2 \},\\
			E_2	& := \{\{a^ib^j, a^kb^l\} \mid i+j, k+l \equiv 1 \bmod 2 \wedge \ |i-k|+|j-l|=t \}.
		\end{align}
		It is clear that $V_1\cup V_2 = G$, and then $k(\G)=2$.

		\item Let $t$ be an even number, and $x=a^ib^j \in G$ arbitrary. If $i+j \equiv 1 \bmod 2$, then we consider the sets
		\begin{align}
			& \{a^kb^l \mid i, k+l \equiv 0 \bmod 2,  j\equiv 1\bmod 2 \wedge \ |i-k|+|j-l|=t \}\\ 
			& \{a^kb^l \mid j, k+l \equiv 0 \bmod 2,  i\equiv 1\bmod 2 \wedge \ |i-k|+|j-l|=t \}.
		\end{align}
		Since $\mathcal{G}$ has no isolated points, at least one of these sets is non empty, and then $\{a^ib^j, a^kb^l\} \in E$.
		
		If $i+j \equiv 0 \bmod 2$, then a similar analysis leads to the same conclusion. Then we have that $\G$ is a connected graph.
	\end{enumerate}
\end{proof}

\section{The t-graph of some 2-generator groups}

This section considers the $t$-graph of a particular case of 2-generator groups. Specifically, we suppose that $a$ is an involution and $b$ has order $n$. For example, the group $G$ considered can be the abelian group $\Z_2\times \Z_n$ or the dihedral group $D_n$ of order $n$. 

The first theorem shows that the 1-graph associated with a finite dihedral group $D_n$ has a simple structure. It  corresponds to a square $(n\times 2)$-grid, as shown in the figure \ref{Dn1} below. Therefore, this graph is bi-chromatic or bipartite.

\begin{theorem}\label{E2}
	The $1$-graph of   $D_n$ is bipartite.
\end{theorem}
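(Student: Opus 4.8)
The goal is to show that the $1$-graph of $D_n$ is bipartite. The plan is to recall that a graph is bipartite if and only if its vertex set admits a 2-coloring in which no edge joins two vertices of the same color, equivalently if and only if it contains no odd cycle. The most direct route here is to exhibit such a 2-coloring explicitly, using the same parity idea that already appears in the even case of Theorem~\ref{T2}.

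First I would write every element of $D_n$ in its normal form $a^i b^j$ with $0\le i\le 1$ and $0\le j\le n-1$, which is available because $D_n$ is a 2-generator group of the type \eqref{2-gen} with $m=2$. Next I would define the coloring $c(a^i b^j) := (i+j) \bmod 2$, partitioning $G$ into the two classes $V_1=\{a^i b^j \mid i+j\equiv 0 \bmod 2\}$ and $V_2=\{a^i b^j \mid i+j\equiv 1 \bmod 2\}$. Then I would verify that every edge of the $1$-graph connects a vertex in $V_1$ to a vertex in $V_2$: if $x=a^ib^j$ and $y=a^kb^l$ are adjacent, then $d_1(x,y)=|i-k|+|j-l|=1$, so exactly one of $|i-k|,|j-l|$ equals $1$ and the other equals $0$. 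In either situation the sums $i+j$ and $k+l$ differ by exactly $1$, hence have opposite parity, so the two endpoints lie in different classes. Since every edge crosses between $V_1$ and $V_2$, the partition $\{V_1,V_2\}$ is a bipartition and the $1$-graph is bipartite.

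An alternative, perhaps cleaner, framing would be to invoke the earlier observation that the $1$-graph of a group in the form \eqref{2-gen} coincides with the $1$-graph of $\Z_{m}\times\Z_{n}$, which for $m=2$ is the Cartesian product of a path $P_2$ with a path $P_n$, i.e.\ the $(2\times n)$-grid graph. Grid graphs are Cartesian products of bipartite graphs (paths are trivially bipartite), and the Cartesian product of bipartite graphs is bipartite, so the claim follows at once. I would likely present the explicit parity coloring as the main argument since it is self-contained, and mention the grid interpretation as the geometric picture matching figure~\ref{Dn1}.

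I do not anticipate a genuine obstacle in this statement, as the parity argument is elementary and mirrors the even-$t$ construction already carried out. The only point requiring a sentence of care is confirming that the normal form $a^ib^j$ is unique so that the coloring $c$ is well defined on $G$; this uses that $|D_n|=2n$ together with the relation $aba=b^{-1}$, which guarantees each element has exactly one such expression with $i\in\{0,1\}$ and $j\in\{0,\dots,n-1\}$.
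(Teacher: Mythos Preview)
Your argument is correct and self-contained. It differs from the paper's proof, and in fact is more rigorous. The paper partitions $D_n$ into the two ``rows'' $\{1,b,\ldots,b^{n-1}\}$ and $\{a,ab,\ldots,ab^{n-1}\}$, records that $d_1(b^i,b^{i+1})=d_1(ab^i,ab^{i+1})=1$, and then declares these two sets a bipartition. But that partition is \emph{not} a graph bipartition: $b^i$ and $b^{i+1}$ are adjacent and lie in the same part. The paper is really only describing the $(2\times n)$-grid structure pictured in Figure~\ref{Dn1}, without actually checking the bipartite condition.

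Your parity coloring $c(a^ib^j)=(i+j)\bmod 2$ is the genuine bipartition (it is precisely the two-coloring visible in the figure), and your verification that an edge with $|i-k|+|j-l|=1$ forces $i+j$ and $k+l$ to have opposite parity is exactly the missing step. The alternative you mention---identifying the $1$-graph with the Cartesian product $P_2\,\square\,P_n$ of two paths and citing that products of bipartite graphs are bipartite---is closer in spirit to what the paper seems to intend, and would also be a clean way to finish. Either of your two routes actually proves the theorem; the paper's written argument, as it stands, does not.
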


\begin{proof} 
	From \eqref{diedrico} we have that 
	\begin{equation}\label{diedrico2}
		D_n = \{1,b, \cdots, b^{n-1}\} \cup \{a b, \ldots, a b^{n-1} \}.
	\end{equation}
	Note that 
	\begin{equation}
		d_1(b^i,b^{i+1})= d_1(ab^i,ab^{i+1})=1,
	\end{equation}
	then the sets $\{1,b, \cdots, b^{n-1}\}$ and $\{a, a b, \cdots, ab^{n-1}\}$ form a bipartition of the vertex set  $D_n$.  
\end{proof}

\begin{figure}[H]
	\label{Dn1}
	\begin{center}
		\begin{tikzpicture}[mynode/.style={font=\color{#1}\sffamily,circle,inner sep=2pt,minimum size=0.2cm,draw,thick}] 
			\node[mynode=yellow,fill=blue] (1) at (1,0) {};
			\node[mynode=yellow,fill=green] (2) at (3,0) {};
			\node[mynode=black,fill=blue] (3) at (5,0) {};
			\node[mynode=green,fill=green] (4) at (7,0) {}; 
			\node[mynode=green,fill=blue] (5) at (9,0) {}; 
			\node[mynode=green,fill=green] (6) at (11,0) {};    
			\node[mynode=green,fill=blue] (6a) at (11,2) {}; 
			\node[mynode=green,fill=green] (5a) at (9,2) {}; 
			\node[mynode=green,fill=blue] (4a) at (7,2) {}; 
			\node[mynode=black,fill=green] (3a) at (5,2) {};
			\node[mynode=yellow,fill=blue] (2a) at (3,2) {};
			\node[mynode=yellow,fill=green] (1a) at (1,2) {};
			\draw[thick] (1) --(2) --(3) --(4)  -- (5) -- (6) -- (6a)  -- (5a) -- (4a) -- (3a) -- (2a) -- (1a) -- (1) 	;
			\draw[thick]  (2) -- (2a) (3) -- (3a) (4) -- (4a) (5) -- (5a)	;
			\node [below] at (1,0) {$b^{n-1}$};
			\node [below] at (3,0) {$b^{n-2}$};
			\node [below] at (5,-0.1) {$\cdots$};
			\node [below] at (7,0) {$b^2$};
			\node [below] at (9,0) {$b$}; 
			\node [below] at (11,0) {$1$}; 
			\node [above] at (11,2) {$a$}; 
			\node [above] at (9,2) {$ab$}; 
			\node [above] at (7,2) {$ab^2$};
			\node [above] at (5,2) {$\cdots$};
			\node [above] at (3.1,2) {$ab^{n-2}$};
			\node [above] at (1.1,2) {$ab^{n-1}$}; 
		\end{tikzpicture}
		\caption{The 1-graph of $D_n$.}
	\end{center}
\end{figure}

The theorem \ref{T2}  leads to a complete characterisation of the $t$-graphs associated with $D_n$.  However, before we start to characterise the $t$-graphs on dihedral groups, let us first look at some useful lemmas.

\begin{lemma}\label{T3}
	Let $\mathcal{G} = (D_n,E)$ be the $t$-graph of  $D_n$. Then    
	\begin{equation}
		|E| = \begin{cases}
			4(n-t) + 2     & \text{If} \ \ t>1 \\
			3n - 2         & \text{If} \ \ t = 1.
		\end{cases}
	\end{equation}
\end{lemma}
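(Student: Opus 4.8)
The plan is to count the edges of $\G=(D_n,E)$ directly, by enumerating the unordered pairs of distinct elements at distance exactly $t$. Writing each element of $D_n$ in the normal form $a^i b^j$ with $i\in\{0,1\}$ and $0\le j\le n-1$, as in \eqref{diedrico2}, the distance between two elements is $d_1(a^ib^j,a^kb^l)=|i-k|+|j-l|$. Since $i,k\in\{0,1\}$ we always have $|i-k|\in\{0,1\}$, so I would split the count of edges according to the two possible values of $|i-k|$; the total $|E|$ is then the sum of the two contributions.

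First I would treat the case $|i-k|=0$, i.e. $i=k$. Here an edge forces $|j-l|=t$ with $j\ne l$, so for each fixed value $i\in\{0,1\}$ the edges correspond to the unordered pairs $\{j,l\}\subseteq\{0,\dots,n-1\}$ with $|j-l|=t$. There are exactly $n-t$ such pairs (take the smaller index in $\{0,\dots,n-1-t\}$), giving $2(n-t)$ edges in total from this case. This part is valid as soon as $t\ge 1$, so that the two endpoints are automatically distinct.

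Next I would treat the case $|i-k|=1$, where the two endpoints lie in different rows: one is of the form $b^j$ and the other $ab^l$. Now an edge requires $|j-l|=t-1$, and since the two endpoints already differ in their $a$-exponent, each admissible pair $(j,l)$ produces a distinct edge. For $t=1$ this means $j=l$, yielding $n$ edges (each $b^j$ joined to $ab^j$); for $t\ge 2$ the condition $|j-l|=t-1\ge 1$ is met by $2(n-t+1)$ ordered pairs, hence $2(n-t+1)$ edges.

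Finally I would add the two contributions. For $t=1$ this gives $2(n-1)+n=3n-2$, and for $t\ge 2$ it gives $2(n-t)+2(n-t+1)=4(n-t)+2$, which is the claimed formula. The argument is essentially routine; the only point requiring care is the boundary case $t=1$, where the second case degenerates to $j=l$ and the pair count changes from $2(n-t+1)$ to $n$, which is precisely what produces the separate branch of the formula. Implicitly $t$ is taken in the range $1\le t\le n$ for which edges exist, and one checks both branches remain correct up to $t=n$, where the formula gives $2$ edges.
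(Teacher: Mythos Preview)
Your proof is correct and follows essentially the same approach as the paper: both split the edge count according to whether $|i-k|=0$ or $|i-k|=1$, obtain $2(n-t)$ edges in the first case and $2(n-t+1)$ (respectively $n$ when $t=1$) in the second, and add. Your treatment of the boundary case $t=1$ is in fact slightly more explicit than the paper's, which simply asserts that ``the same argument'' yields $3n-2$.
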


\begin{proof}
	Let $x=a^ib^j, y=a^kb^l\in D_n$, then $0\leq i,k\leq 1$ and $0\leq j,l\leq n-1$. If $d_1(x,y) = |i-k|+|j-l|=t$, then for $|i-k|$ we have the following cases:
	\begin{enumerate}[(1)]
		\item If $i=k$ then $|j-l|=t$. Note  that there are $n-t$ ways to choose  $j, l\in \{0, \dotsc, n-1\}$ such that the absolute value of their difference   is $t$.
		
		\item  If $i\neq k$ then  $|j-l|=t-1$. In this case there are $n-t+1$ forms to choose $j, l\in \{0, \dotsc, n-1\}$ such that the absolute value of their difference   is  $t-1$.
		
	\end{enumerate}
	If $t>1$ then  there are $2(n - t) + 2(n - t + 1)$  ways of constructing an edge between two elements of $D_n$. Therefore we have that $|E| = 4(n - t) + 2$.
	
	If $t = 1$ then we the same argument we have that $|E| = 3n - 2$. 
\end{proof}

\begin{lemma}\label{T4}
	Let  $f: D_n\longrightarrow D_n$ defined as follows
	\begin{equation}
		f(a^ib^j) = \begin{cases}
			b^j      & \text{If \ $i=1$} \\
			ab^j     & \text{If \ $i = 0$}.
		\end{cases}
	\end{equation}
	Then $f$ is an isometry under the Minkowski metric 	\eqref{d1}. Further, if we restrict $f$ to $U\subset D_n$, we have that  $U$ and $f(U)$ are also isometric under the Minkowski metric.
\end{lemma}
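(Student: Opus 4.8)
The plan is to collapse the two-case definition of $f$ into a single uniform formula and then verify the distance-preserving property by an essentially one-line computation. The key observation is that for $a^ib^j\in D_n$ with $i\in\{0,1\}$ we may write $f(a^ib^j)=a^{1-i}b^j$ in both branches: when $i=1$ this gives $a^0b^j=b^j$ and when $i=0$ it gives $a^1b^j=ab^j$, matching the stated cases. Since every element of $D_n$ has a unique normal form $a^ib^j$ with $0\leq i\leq 1$ and $0\leq j\leq n-1$, the map $f$ is well defined, and in this form it is transparent that $f$ merely interchanges the two cosets $\{1,b,\dots,b^{n-1}\}$ and $\{a,ab,\dots,ab^{n-1}\}$ while leaving the exponent $j$ untouched.

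First I would establish the isometry on all of $D_n$. Taking two arbitrary elements $x=a^ib^j$ and $y=a^kb^l$, formula \eqref{d1} together with the uniform expression for $f$ yields
\begin{equation*}
	d_1(f(x),f(y)) = |(1-i)-(1-k)| + |j-l| = |k-i| + |j-l|.
\end{equation*}
Because $|k-i|=|i-k|$, the right-hand side equals $|i-k|+|j-l|=d_1(x,y)$, so $f$ preserves the Minkowski distance.

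To deduce the statement about restrictions, I would observe that $f$ is an involution: applying the uniform formula twice gives $f(f(a^ib^j))=f(a^{1-i}b^j)=a^{1-(1-i)}b^j=a^ib^j$, hence $f\circ f=\mathrm{id}_{D_n}$ and in particular $f$ is a bijection. For any subset $U\subseteq D_n$ the restriction $f|_U\colon U\to f(U)$ is then a bijection onto its image, and by the previous paragraph it preserves $d_1$; therefore $f|_U$ is an isometry between the metric subspaces $U$ and $f(U)$, which is exactly what is asserted.

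There is no genuine obstacle here: the entire content of the lemma is the elementary identity $|(1-i)-(1-k)|=|i-k|$, which expresses that reflecting the first coordinate about $1/2$ is an isometry of $\{0,1\}$ on its own. The only points deserving a word of care are the well-definedness of $f$, guaranteed by uniqueness of the dihedral normal form, and the remark that $f$ is a bijection, since it is this bijectivity that lets one upgrade ``distance-preserving map'' to ``isometry onto the image'' for an arbitrary restriction $U$.
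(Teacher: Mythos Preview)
Your proof is correct and proceeds along the same lines as the paper's, with one cosmetic improvement worth noting. The paper establishes bijectivity first (via injectivity and $f\circ f=\mathrm{id}$) and then checks the isometry by splitting into the four cases $(i,k)\in\{0,1\}^2$, handling each separately. You instead collapse the definition into the single formula $f(a^ib^j)=a^{1-i}b^j$ and use the identity $|(1-i)-(1-k)|=|i-k|$ to dispatch all four cases at once; this is tidier but mathematically identical, since your one-line computation is exactly what each of the paper's cases reduces to. The treatment of the restriction claim is the same in both.
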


\begin{proof}
	It is immediate that $f$ is an injective function and  $(f\circ f)(x) = x$, for all $x\in D_n$. That is, $f$ is bijective. To prove that $f$ is an isometry, let  $a^ib^j, a^kb^l\in D_n$. Then 
	\begin{enumerate}[(1)]
		\item   If $i, k=1$ then $d_1(f(a^ib^j), f(a^kb^l)) = d_1(b^j, b^l) = d_1(a^ib^j, a^kb^l)$.
		\item   If $i, k=0$ then similar to the previous case.
		\item  If $i = 0$ and $k=1$ then $d_1(f(a^ib^j), f(a^kb^l)) = d_1(ab^j, b^l) = d_1(a^ib^j, a^kb^l)$.
		\item  If $i = 1$ and $k=0$ then similar to the previous case.
	\end{enumerate}
	Therefore $f$ is an isometry on $D_n$. The other statement is clear 
\end{proof}

%
%

\begin{theorem}\label{T5}
	(\textbf{Characterisation of $t$-graphs on $D_n$})\\ 
	Let $\G=(D_n, E)$ the $t$-graph of $D_n$ with $n \geq 2$. We define $r:=\left\lceil \frac{n}{2} \right\rceil$. 
	\begin{enumerate}[(1)]
		\item If $t\leq r$ and $t$ is an even number then $k(\G) = 2$, and these connected components are isomorphic.
		
		\item  If $t\leq r$ and $t$ is an odd number then  $\G$ is an connected graph.
		
		\item If $t = r + s$, with $1\leq s \leq n-r$ then the number $K(\G)$ of connected components of $\G$ is given by  
		\begin{equation}
			k(\G) = \begin{cases}
				4(s-1) + 2      &  \text{If $n$ is even}  \\
				4s & \text{If $n$ is odd},
			\end{cases}
		\end{equation}
		where two of the connected components of $\G$ are isomorphic path graph.	
	\end{enumerate}
\end{theorem}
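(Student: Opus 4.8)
The plan is to obtain (1) and (2) directly from Theorem \ref{T2} and to reduce (3) to a single structural fact about the exponents of $b$.

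\emph{Parts (1) and (2).} Writing $D_n=\{a^ib^j: i\in\{0,1\},\ 0\le j\le n-1\}$ exhibits it in the form \eqref{2-gen} with the first exponent ranging over two values and the second over $n$, so the bound $\lceil\frac{m+n-2}{2}\rceil$ of Theorem \ref{T2} becomes exactly $\lceil n/2\rceil=r$. Hence for $t\le r$ Theorem \ref{T2} already gives $k(\G)=2$ when $t$ is even and connectedness when $t$ is odd, which is (2) and the count in (1). For the isomorphism asserted in (1) I would invoke the isometry $f(a^ib^j)=a^{1-i}b^j$ of Lemma \ref{T4}: it reverses the parity of $i+j$, so it maps the vertex set $\{a^ib^j: i+j\equiv 0 \bmod 2\}$ of one component onto the vertex set $\{a^ib^j: i+j\equiv 1 \bmod 2\}$ of the other; being an isometry it preserves adjacency, hence it is a graph isomorphism between the two components.

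\emph{Part (3), setup.} Here $t=r+s$ forces $t>n/2$. Call an edge of $\G$ an $I$-edge if it joins $a^ib^j,a^ib^l$ with $|j-l|=t$ (equal $a$-exponent) and a $C$-edge if it joins $a^ib^j,a^{1-i}b^l$ with $|j-l|=t-1$. Since $2t>n$ and $2(t-1)>n-1$ in this range, no vertex can have two $I$-neighbours or two $C$-neighbours, so $\deg\le 2$ and $\G$ is a disjoint union of paths and cycles. A short computation shows that every edge joins a vertex with $j\le n-t$ (\emph{low}) to one with $j\ge t-1$ (\emph{high}), while the vertices with $n-t<j<t-1$ have no neighbour and are isolated; thus the non-isolated part of $\G$ is bipartite with the low and the high vertices as the two sides.

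\emph{Part (3), crux and conclusion.} Along any path the two edges at an interior vertex are one $I$-edge and one $C$-edge, so the edge types alternate; grouping two consecutive edges changes the exponent $j$ by $\pm t\pm(t-1)\in\{\pm1,\pm(2t-1)\}$, and since $|\Delta j|\le n-1<2t-1$ the change is always $\pm1$. The key point I must establish is that this sign cannot flip: because the edge types alternate globally, every two-edge window along the subsequence of low (resp.\ high) vertices has the same $(I,C)$-pattern, hence the same sign, so the $j$-values of the low vertices, and likewise of the high vertices, are strictly monotone along the path. Two conclusions follow. First, a cycle would have to return a low vertex to its own $j$-value, which strict monotonicity forbids; hence $\G$ is acyclic, and
\begin{equation}
  k(\G)=|V|-|E|=2n-\bigl(4(n-t)+2\bigr),
\end{equation}
which, with $t=r+s$ and $r=\lceil n/2\rceil$, equals $4(s-1)+2$ for $n$ even and $4s$ for $n$ odd (using the edge count of Lemma \ref{T3}). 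Second, since a path never repeats a $j$-value, no component can contain both $1=a^0b^0$ and $a=a^1b^0$; these two components are therefore distinct, and $f$ carries one onto the other, exhibiting the two isomorphic path components (each is nontrivial, since $1$ is joined to $ab^{t-1}$). The main obstacle is precisely the sign-locking step, namely ruling out that $j$ reverses direction along a path, which is what simultaneously forbids cycles and separates the components of $1$ and $a$.
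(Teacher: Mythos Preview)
Your treatment of parts (1) and (2) is the same as the paper's: invoke Theorem~\ref{T2} for the component count/connectedness and then the isometry $f$ of Lemma~\ref{T4} to identify the two components in the even case.

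For part (3) your argument is correct but genuinely different from the paper's. The paper splits into the cases $t$ even and $t$ odd. For $t$ even it re-uses the parity-of-$(i+j)$ decomposition from the proof of Theorem~\ref{T2}, computes $|V_1|=|V_2|=2(n-t)+2$, and then compares with the edge count of Lemma~\ref{T3} to force each piece to be a path; for $t$ odd it writes down an explicit list of edges $\{ab^{t+m-1},b^m\},\{b^m,b^{t+m}\},\dots$ and checks it is a path, then transports it by $f$. Your route avoids the parity split altogether: the $I$/$C$ edge classification, together with $2t>n+1$ in this range, gives $\deg\le 2$, and the alternation of edge types along any walk is exactly what you need to lock the sign of the two-step increments $\pm1$ on the low (and high) $j$-values. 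Your worry about the ``sign-locking'' step is unfounded: since at every degree-$2$ vertex the two incident edges are one $I$ and one $C$, the global edge sequence along any path or cycle is forced to be $I,C,I,C,\dots$ (or the reverse), so consecutive two-edge windows all carry the \emph{same} $(I,C)$-pattern and hence the same sign. Acyclicity then gives $k(\G)=|V|-|E|$ directly from Lemma~\ref{T3}, and your observation that a path never repeats a $j$-value cleanly separates the components of $1$ and $a$.

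What each approach buys: the paper's argument is more explicit (it literally exhibits the path), but it needs the even/odd case analysis and its ``minimum number of edges'' step is somewhat terse. Your approach is uniform in $t$, more structural, and the same monotonicity lemma does double duty, ruling out cycles and distinguishing the two path components; it also makes transparent why exactly two non-trivial components arise (four degree-$1$ vertices, namely $a^ib^{n-t}$ and $a^ib^{t-1}$ for $i=0,1$).
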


\begin{proof} 
	(1) It follows from Theorem \ref{T2}  that $k(\G) = 2$. The connected components of $\G$ are $\mathcal{C}_1 = (V_1, E_1)$ and $\C_2 = (V_2, E_2)$ like in the proof of Theorem \ref{T2}  (1). It is then sufficient to show that $\C_1\cong \C_2$.
	Using the function $f$ defined in Lemma \ref{T4} we have for $a^ib^j, a^kb^l\in V_1$ that 
	\begin{equation}
		\{a^ib^j, a^kb^l\}\in E_1 \Longleftrightarrow \{f(a^ib^j), f(a^kb^l)\}\in E_2,
	\end{equation}
	which leads to $\C_1\cong \C_2$.
	
	\noindent (2) This follows immediately from  Theorem  \ref{T2}. 
	
	\noindent (3) We differentiate two cases:
	\begin{enumerate}[(a)]
		\item Suppose $t$ is an even number. The condition $t>r$  implies that $\G$ has isolated points and then using Theorem \ref{T2} we have that $\G$ has at least two connected components. Let $\C_1 = (V_1, E_1)$ and $\C_2 = (V_2, E_2)$   the connected components constructed in the proof of  theorem \ref{T2} (1). 
		
		We prove first that $|V_1|=|V_2|$. In fact, we have that $|j-l|=t$ or $|j-l|=t-1$, which implies that  
		\begin{equation}
			j\in \{t-1, \dotsc, n-1\}\cup \{0, \dotsc, n-t\} =: A,
		\end{equation}
		since  $l\in \{0, \dotsc, n-1\}$. 
		
		It is clear that $\{t-1, \dotsc, n-1\}\cap \{0, \dotsc, n-t\}=\emptyset$, therefore 
		\begin{equation}
			|A|=2(n-t)+2.
		\end{equation}
		On the other hand, it follows immediately that $j\in A$ and $i+j$ is an even number if and only if $a^ib^j\in V_1$, and then 
		\begin{equation}
			|V_1| = |A| = 2(n-t)+2.
		\end{equation}
		Analogously  $|V_2|=|A|$  and we have $|V_1|=|V_2|$.
		
		To demonstrate that $\C_1 \cong \C_2$, we consider again the function $f$ defined in Lemma \ref{T4}. Note that $f(V_1)=V_2$, and since $f$ is an isometry we have the statement. 
		
		Finally,  using Lemma \ref{T3} we have that $|E| = 4(n-t)+2$, and   the isomorphy between $\C_1$ and $\C_2$ implies that $|E_1|=|E_2|$. Further, note that the minimum value for  $|E_1|$ and $|E_2|$ is $2(n-t)+1$. This prove that  $\C_1$ and $\C_2$ are the unique connected components of $\G$, which are not isolated points and these are actually isomorphic path graphs.
		
		The number of isolated points of $\G$ is $|D_n| - |V_1| - |V_2| = 2n - 4(n-t) - 4 = -2n+4t-4$, and consequently  $k(\G) = -2n+4t-2 = -2n+4r+4s-2$. That is,
		\begin{itemize}
			\item If $n$ is even, then $k(\G) =  -2n+4(\frac{n}{2})+4s-2 = 4(s-1) + 2$.
			\item  If $n$ is odd, then $k(\G) =  -2n+4(\frac{n+1}{2})+4s-2 = 4s$.
		\end{itemize}

		\item Suppose now $t$ is an odd number.  Similar as before, the graph $\G$ has isolated points, and the set
		\begin{equation}
			\{\{a^ib^j, a^kb^l\} \mid i+j\equiv 0 \bmod 2, k+l \equiv 1 \bmod 2 \wedge |i-k|+|j-l|=t \},
		\end{equation}
		is a subset of $E$. Let $V'$ be the set consisting of the non-isolated points of $\G$. Using the same argument as in (a) we get that 
		\begin{equation}
			|V'| = 2|A| = 4(n-t) + 4.
		\end{equation}
		By Lemma \ref{T3} we have that $|E|=4(n-t)+2$ , then comparing $|V'|$ and $|E|$, excluding isolated points, it follows that $\G$ cannot be connected. 
		
		Let $m$ be an even number such that 
		\begin{equation}
			\begin{cases}
				0\leq m\leq n-t-1      & \text{if $n-t-1$ is even, and} \\
				0\leq m\leq n-t & \text{if $n-t-1$ is odd},
			\end{cases}
		\end{equation}
		and consider the subgraph $\C_1 = (V_1, E_1)$ of $\G$ with the following edges:
		\begin{equation*}
			\{ab^{t+m-1}, b^m\},\ \{b^m,  b^{t+m}\},\ 	\{b^{t+m}, ab^{m+1}\},\  \{ab^{m+1},ab^{t+m+1}\}.
		\end{equation*}
		Then $\C_1$ is a connected component of $\G$, and furthermore 
		\begin{equation}
			|V_1| = 2(n-t)+2 \ \wedge \ |E_1| =2(n-t)+1,
		\end{equation}
		whence it is concluded that $\C_1$ is a  path graph.
		
		As before, using the function $f$ from Lemma \ref{T4} we have that there exist another connected component $\C_2 = (f(V_1), E_2)$, isomorphic to $\C_1$.  Thus, 
		\begin{equation}
			|E_1|+|E_2|=|E| \ \wedge \ |V_1|+|V_2|=|V'|.
		\end{equation}
		It means that  $\C_1$ and $\C_2$ are the unique connected components of $\G$, and analogously to the previous case, we have the same values for $k(\G)$. 
	\end{enumerate}
\end{proof}

The following corollary is a generalisation of Theorem \ref{E2}

\begin{corollary}\label{T6}
	Let $G$ be a 2-generator group in the form \eqref{2-gen} with  $n, m \geq 2$, and  $t$  an odd number.  Let further $r$ defined as in Theorem \ref{T2}. If $t\leq r$, then  $\G = (G,E)$ is a bipartite  graph.
\end{corollary}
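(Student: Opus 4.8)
The plan is to exhibit an explicit bipartition of the vertex set $G$ and to check that every edge of $\G$ crosses between the two parts; by the standard criterion (a graph is bipartite precisely when its vertices admit a two-colouring with no monochromatic edge) this settles the claim. The natural colouring to try is the one already used to split $\G$ in the proof of Theorem \ref{T2}: assign to the vertex $a^ib^j$ the colour $(i+j)\bmod 2$, and set
\begin{align*}
	V_1 &:= \{a^ib^j \mid i+j\equiv 0 \bmod 2\}, & V_2 &:= \{a^ib^j \mid i+j\equiv 1 \bmod 2\}.
\end{align*}

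First I would record the parity computation that makes this colouring work. For any two vertices $x=a^ib^j$ and $y=a^kb^l$ one has $|i-k|\equiv i-k\equiv i+k \pmod 2$ and likewise $|j-l|\equiv j+l \pmod 2$, so
\begin{equation*}
	d_1(x,y) = |i-k|+|j-l| \equiv (i+j)+(k+l) \pmod 2.
\end{equation*}
Consequently, whenever $\{x,y\}\in E$ we have $d_1(x,y)=t$, and since $t$ is odd this forces $(i+j)+(k+l)$ to be odd; that is, $i+j$ and $k+l$ have opposite parities. Hence every edge of $\G$ joins a vertex of $V_1$ to a vertex of $V_2$, so $V_1$ and $V_2$ are independent sets and $\{V_1,V_2\}$ is a bipartition of $\G$.

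The role of the hypothesis $t\leq r$ is only to guarantee, via Theorem \ref{T2} (2), that $\G$ is connected and in particular has no isolated vertices, so that both colour classes are genuinely used and the bipartition is nondegenerate; the parity argument itself is valid for every odd $t$. I expect essentially no obstacle here beyond the elementary congruence $|x|\equiv x \pmod 2$: once that is in hand the bipartiteness is immediate, and the content of the corollary is precisely that the two-colouring implicit in Theorem \ref{T2} is an honest proper colouring. This is exactly what generalises the case $m=2$, $t=1$ of $D_n$ treated in Theorem \ref{E2}.
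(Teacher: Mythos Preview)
Your proof is correct and follows essentially the same approach as the paper: both use the parity bipartition $V_1=\{a^ib^j: i+j\equiv 0\bmod 2\}$, $V_2=\{a^ib^j: i+j\equiv 1\bmod 2\}$. In fact your version is more complete, since the paper simply asserts that ``it is immediate to verify'' that $V_1,V_2$ form a bipartition, whereas you actually supply the parity computation $d_1(x,y)\equiv (i+j)+(k+l)\pmod 2$ that justifies it; you also correctly invoke Theorem~\ref{T2}(2) for connectedness, which is the appropriate reference for general $2$-generator groups (the paper's proof cites Theorem~\ref{T5}, which is stated only for $D_n$).
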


\begin{proof}
	From Theorem  \ref{T5}  we have that $\G$ is connected. Now we define the sets $V_1$ and $V_2$ as follows:
	\begin{align}
		V_1 & := \{a^ib^j \mid  i+j \equiv 0 \bmod 2 \}\\
		V_2 & := \{a^ib^j\mid  i+j\equiv 1 \bmod 2 \}
	\end{align}
	It is immediate to verify that $V_1$ and $V_2$ form a bipartition of $G$, and $\G$ is a  bipartite  graph.
\end{proof}

An illustration of the previous Corollary is the following:    

\begin{figure}[H]
	\begin{center}
		\begin{tikzpicture}[mynode/.style={font=\color{#1}\sffamily,circle,inner sep=2pt,minimum size=0.2cm,draw,thick}] 
			\node[mynode=yellow,fill=blue] (1) at (1,0) {};
			\node[mynode=yellow,fill=green] (2) at (3,0) {};
			\node[mynode=black,fill=blue] (3) at (5,0) {};
			\node[mynode=green,fill=green] (4) at (7,0) {}; 
			\node[mynode=yellow,fill=green] (1a) at (1,1) {};
			\node[mynode=yellow,fill=blue] (2a) at (3,1) {};
			\node[mynode=black,fill=green] (3a) at (5,1) {};
			\node[mynode=green,fill=blue] (4a) at (7,1) {}; 
			\node[mynode=yellow,fill=blue] (1b) at (1,2) {};
			\node[mynode=yellow,fill=green] (2b) at (3,2) {};
			\node[mynode=black,fill=blue] (3b) at (5,2) {};
			\node[mynode=green,fill=green] (4b) at (7,2) {}; 
			\draw[thick] (1) --(2) --(3) --(4)    (3a) -- (2a) (1b) -- (2b) -- (3b) -- (4b) -- (1) ;
			\draw[thick] (1) --(1a) -- (1b) -- (4) (2) --(2a) -- (2b)   (3) -- (3a) -- (3b)  (4) -- (4a) -- (4b)  ;
			
			\node [below] at (1,0) {$00$};
			\node [below] at (3,0) {$21$};
			\node [below] at (5,0) {$02$};
			\node [below] at (7,0) {$23$};
			\node [right] at (7,1) {$11$};
			\node [above] at (7,2) {$03$};
			\node [above] at (5,2) {$22$};
			\node [above] at (3,2) {$01$};
			\node [above] at (1,2) {$20$};
			\node [left] at (1,1) {$22$};
			\node [right] at (5,1) {$10$};
			\node [left] at (3,1) {$13$};
		\end{tikzpicture}
	\end{center}
	\caption{The 3-graph of $G= \Z_3\times \Z_4$.}
	\label{TF5}
\end{figure}

\begin{corollary}\label{T7}
	Let $n$ be an odd number, $n\geq 5$ and  $t = \frac{n+1}{2}$.  
	\begin{enumerate}[(1)]
		\item If $t$ is odd, then  $\G = (D_n,E)$  is a cycle of even length.		
		\item  If $t$ is even, then $\G = (D_n,E)$  is non connected, ant it has two isomorphic components, which are two cycles. Furthermore $\chi(\G) =3$.
	\end{enumerate}
\end{corollary}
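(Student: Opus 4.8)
The plan is to specialize the general characterization in Theorem \ref{T5}(3) to the precise boundary value $t = \frac{n+1}{2}$ when $n$ is odd, and then upgrade the qualitative conclusion (path components / isolated points) to the sharper claim that the non-trivial components are cycles. First I would observe that since $n$ is odd, $r = \lceil n/2 \rceil = \frac{n+1}{2} = t$, so this is exactly the threshold case where $t$ equals $r$ rather than exceeding it; this is the subtle point, because Theorem \ref{T5}(3) is stated for $t = r + s$ with $s \geq 1$, whereas here $s = 0$. Thus I must argue directly at the boundary rather than quoting part (3) verbatim. The key computation is to count, for each residue of $i+j \bmod 2$, exactly how many neighbours each vertex $a^i b^j$ has: whereas for $t < r$ every vertex has two neighbours forcing a cycle/path structure, at $t = \frac{n+1}{2}$ one should check that the degree is uniformly $2$ on the relevant vertices, which together with connectedness of each piece yields a disjoint union of cycles.

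For part (1), when $t$ is odd I would invoke Theorem \ref{T5}(2) to get that $\G$ is connected, and then show every vertex has degree exactly $2$. A finite connected graph that is $2$-regular is a single cycle, and since by Corollary \ref{T6} the graph is bipartite (as $t$ is odd and $t \leq r$), the cycle has even length. The degree computation is the heart of the matter: using $d_1(a^ib^j, a^kb^l) = |i-k| + |j-l| = t$ with $|i-k| \in \{0,1\}$, each vertex $b^j$ (say) connects via $|j-l| = t$ and via $|j-l| = t-1$ (the latter flipping the $a$-coordinate), and at the exact value $t = \frac{n+1}{2}$ the arithmetic of $j, l \in \{0, \dots, n-1\}$ must be tallied to confirm the count is $2$ at each vertex. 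I would set this up with the edge-set description from the proof of Theorem \ref{T5}(3)(b) and verify the degree via the explicit $4$-edge gadget $\{ab^{t+m-1}, b^m\}, \{b^m, b^{t+m}\}, \{b^{t+m}, ab^{m+1}\}, \{ab^{m+1}, ab^{t+m+1}\}$ appearing there.

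For part (2), when $t$ is even, Theorem \ref{T5}(1)/(3)(a) already gives $k(\G) = 2$ with two isomorphic components via the isometry $f$ of Lemma \ref{T4}. I would again compute that each non-isolated vertex has degree exactly $2$; combined with the fact (from the $|V|$ versus $|E|$ count $|V_1| = 2(n-t)+2$, $|E_1| = 2(n-t)+1$ adjusted at the boundary) that each component is finite, $2$-regular and connected, this forces each component to be a cycle. For the chromatic number $\chi(\G) = 3$, I would argue that since $t$ is even the cycle must have \emph{odd} length: an even value of $t$ makes adjacency connect vertices $a^ib^j, a^kb^l$ with $i+j \equiv k+l \bmod 2$, so the $2$-colouring argument of Corollary \ref{T6} breaks and one instead verifies directly that the cycle length is odd, giving $\chi = 3$ by the standard fact that odd cycles are $3$-chromatic. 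The main obstacle I anticipate is the boundary edge-counting: at $t = \frac{n+1}{2}$ the set $A = \{t-1, \dots, n-1\} \cup \{0, \dots, n-t\}$ degenerates (the two blocks become adjacent or nearly exhaust $\{0,\dots,n-1\}$ since $n-t = t-1$), so I must carefully confirm that no vertex acquires degree $1$ (which would produce a path endpoint rather than a cycle) and that the parity of the resulting cycle length is governed cleanly by the parity of $t$.
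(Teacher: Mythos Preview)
Your plan is correct and would yield a valid proof, but it takes a more laborious route than the paper's. The paper does not compute degrees vertex by vertex; instead it simply invokes Lemma~\ref{T3}. Since $t = \tfrac{n+1}{2}$ gives $n-t = \tfrac{n-1}{2}$, Lemma~\ref{T3} yields $|E| = 4(n-t)+2 = 2n = |D_n|$. In case~(1), Theorem~\ref{T5}(2) already gives connectedness, so a connected graph on $2n$ vertices with $2n$ edges (and no isolated points) is a single cycle of even length $2n$. In case~(2), Theorem~\ref{T5}(1) gives two isomorphic components, so each has $n$ vertices and $n$ edges, hence is an $n$-cycle; since $n$ is odd, $\chi(\G)=3$.

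Your degree-$2$ verification, the appeal to Corollary~\ref{T6} for bipartiteness, and the parity analysis of adjacency all lead to the same conclusions, but they reprove facts that the global edge count delivers in one line. In particular, you need not worry about the ``boundary'' behaviour of the set $A$ or about adapting the path-gadget from Theorem~\ref{T5}(3)(b): that machinery is for $t>r$, whereas here $t=r$ and parts~(1) and~(2) of Theorem~\ref{T5} apply directly. One caution: the formula $|V_1| = 2(n-t)+2$ you quote from the proof of Theorem~\ref{T5}(3)(a) is specific to the $t>r$ regime (where isolated points appear); at $t=r$ there are no isolated points and the two components simply partition $D_n$ with $|V_1|=|V_2|=n$.
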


\begin{proof}
	These statements follow direct from Theorem \ref{T5}. Note that  $t=r$.
	\begin{enumerate}[(1)]
		\item  From Lemma  \ref{T3} it follows  that
		\begin{equation}
			|E| = 4(n-(\tfrac{n+1}{2})) + 2 = 2n = |D_n|, 
		\end{equation}
		and then $\G$  is a cycle of even length.
		
		\item $\G$ has two isomorphic connected components, say  $\C_1 = (V_1, E_1)$ and  $\C_2 = (V_2, E_2)$. Lemma  \ref{T3} implies that \begin{equation}
			|E| = 4(n-(\tfrac{n+1}{2})) + 2 = 2n, 
		\end{equation}
		and follows that $|E_1| = |E_2| = n$, and so $\G$  is constituted by two isomorphic cycles. Finally, note that each component has an odd number of vertices, Then $\chi(\G) =3$.
	\end{enumerate}
\end{proof}

\begin{corollary}
	Let $n$ be an even number, $n\geq 2$ and  $t = \frac{n}{2} + 1$.  Then the  $t$-graph of $D_n$  consists of two isomorphic paths graphs.
\end{corollary}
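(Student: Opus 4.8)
The plan is to read this off as the boundary case $s = 1$ of part (3) of Theorem \ref{T5}, so that essentially no new argument is needed beyond invoking that theorem. Since $n$ is even, the quantity $r := \left\lceil \frac{n}{2} \right\rceil$ appearing there equals $\frac{n}{2}$, and the hypothesis $t = \frac{n}{2} + 1$ rewrites as $t = r + 1 = r + s$ with $s = 1$. Because $n \geq 2$ forces $1 \leq s = 1 \leq n - r = \frac{n}{2}$, the value $t$ lies squarely in the range handled by Theorem \ref{T5}(3), so that theorem applies directly and I need not separately track the parity of $t = \frac{n}{2}+1$, since both sub-cases (a) and (b) of its proof deliver the same component count.

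Next I would substitute into the count provided there. For $n$ even the formula reads $k(\G) = 4(s-1) + 2$, and with $s = 1$ this gives $k(\G) = 2$; that is, $\G$ has exactly two connected components. Theorem \ref{T5}(3) moreover asserts that two of the connected components are isomorphic path graphs. Since the total number of components is precisely two, these must be all of them, so $\G$ consists of exactly two isomorphic path graphs with nothing left over.

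To make the phrase \emph{consists of} fully rigorous I would confirm the absence of isolated points from the vertex bookkeeping in the proof of Theorem \ref{T5}(3), where each non-isolated component has $2(n-t) + 2$ vertices. At $t = \frac{n}{2} + 1$ this equals $2\left(\frac{n}{2} - 1\right) + 2 = n \geq 2$, so each of the two path graphs is genuinely a path (not a degenerate isolated vertex), and together they carry $n + n = 2n = |D_n|$ vertices. This matches the isolated-point tally $-2n + 4t - 4$ from that proof, which vanishes exactly at $t = \frac{n}{2} + 1$.

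The only point to watch here is bookkeeping at this extreme value rather than any genuine obstacle: one must check that $s = 1$ is admissible and that the two isomorphic path graphs guaranteed by Theorem \ref{T5}(3) exhaust $D_n$ instead of coexisting with stray isolated vertices. Both verifications are immediate from the displayed formulas, so the corollary follows at once from Theorem \ref{T5}.
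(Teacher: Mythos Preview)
Your proposal is correct and follows essentially the same route as the paper: identify $r=\tfrac{n}{2}$, write $t=r+1$ so $s=1$, and invoke Theorem~\ref{T5}(3) to get $k(\G)=4(s-1)+2=2$ with the two components isomorphic path graphs. The paper's own proof is a one-line version of this, simply noting $r=\tfrac{n}{2}$, $t=r+1$, $k(\G)=2$, and declaring the rest clear; your additional vertex-count check that the two paths exhaust $D_n$ is a welcome elaboration but not a different method.
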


\begin{proof}
	Using Theorem \ref{T5}, and since $n$ is an even number, we have that  $r = \frac{n}{2}$, and then $t = r + 1$, and $k(\G) = 2$. The rest is clear. 
\end{proof}

\begin{corollary}
	Let $n\geq 2$, and $r$ as in Theorem \ref{T5}. Then the $t$-graph of $D_n$ is 2-chromatic if  $t\leq r$ and $t$ is an odd number or  $t>r$.
\end{corollary}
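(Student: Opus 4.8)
The plan is to use the standard characterization that a graph is $2$-chromatic exactly when it is bipartite and carries at least one edge, and then to settle the two stated regimes separately, each time reducing to structural facts already established for $D_n$. Throughout I would record that $D_n$ is a $2$-generator group of the form \eqref{2-gen} with $m=2$, so that the bound $\lceil (m+n-2)/2\rceil$ of Theorem \ref{T2} coincides with $\lceil n/2\rceil = r$; in particular the admissible parameter range for a nonempty edge set is $1\le t\le n$.

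For the first regime, $t\le r$ with $t$ odd, I would quote Corollary \ref{T6} directly, since it already asserts that $\G$ is bipartite and hence $\chi(\G)\le 2$. For the reverse inequality I would appeal to Theorem \ref{T5}(2): in this range $\G$ is connected, and since $\lvert D_n\rvert = 2n\ge 4$, a connected graph on at least two vertices must contain an edge, so $\chi(\G)\ge 2$. Combining the two bounds gives $\chi(\G)=2$.

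For the second regime, $t>r$, I would invoke Theorem \ref{T5}(3), according to which $\G$ is the disjoint union of a collection of isolated vertices together with exactly two isomorphic path graphs, each carrying $2(n-t)+1$ edges. Paths and isolated vertices are bipartite, and a disjoint union of bipartite graphs is bipartite, so $\chi(\G)\le 2$; the two path components supply at least one edge, forcing $\chi(\G)\ge 2$. Hence $\chi(\G)=2$ again.

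Because the structural content is entirely furnished by Corollary \ref{T6} and Theorem \ref{T5}, there is essentially no new computation to perform. The only point deserving attention is the lower bound $\chi(\G)\ge 2$, that is, the guarantee that $\G$ carries an edge in each regime; I expect this edge-existence check, rather than bipartiteness, to be the sole mild obstacle, and it is disposed of by Lemma \ref{T3}, which yields $\lvert E\rvert = 4(n-t)+2>0$ for $t>1$ and $\lvert E\rvert = 3n-2>0$ for $t=1$ throughout the range $1\le t\le n$.
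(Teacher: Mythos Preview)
Your proposal is correct and follows essentially the same route as the paper, which simply states that the result follows immediately from Theorem~\ref{T5} and Corollary~\ref{T6}. You have merely spelled out the implicit steps: bipartiteness from Corollary~\ref{T6} (for odd $t\le r$) and from the path-plus-isolated-vertices structure of Theorem~\ref{T5}(3) (for $t>r$), together with the edge-existence check via Lemma~\ref{T3} to secure the lower bound $\chi(\G)\ge 2$, a point the paper leaves unremarked.
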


\begin{proof}
	It follows immediately from  Theorem \ref{T5} and Corollary \ref{T6}.
\end{proof}

\begin{corollary}
	The $n$-graph of $D_n$  has $2(n-1)$  connected components and two of these are path graph wit 2 vertices.
\end{corollary}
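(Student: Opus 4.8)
The plan is to derive this statement as a direct consequence of the characterisation Theorem \ref{T5}, applied with $t = n$. First I would write $t = n$ in the form $t = r + s$ required by part (3) of that theorem. Since $r = \lceil n/2 \rceil$, this forces $s = n - r$, and I would check that $s$ lies in the admissible range $1 \leq s \leq n - r$: the choice $s = n - r$ saturates the upper bound, and $s \geq 1$ holds because $r = \lceil n/2 \rceil \leq n - 1$ for every $n \geq 2$. This places the $n$-graph firmly in the regime of part (3) rather than the regime $t \leq r$ of parts (1)--(2).

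With the parameters fixed, I would substitute into the formula of Theorem \ref{T5}(3) and handle the two parities. When $n$ is even, $r = n/2$ gives $s = n/2$ and hence $k(\G) = 4(s-1) + 2 = 4(n/2 - 1) + 2 = 2(n-1)$; when $n$ is odd, $r = (n+1)/2$ gives $s = (n-1)/2$ and hence $k(\G) = 4s = 2(n-1)$. Thus in both parities the $n$-graph has exactly $2(n-1)$ connected components, which is the first assertion.

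For the second assertion, Theorem \ref{T5}(3) already guarantees that two of these components are isomorphic path graphs; what remains is to pin down their size. From the proof of Theorem \ref{T5}(3) the two non-isolated components $\C_1$ and $\C_2$ satisfy $|V_1| = |V_2| = 2(n-t) + 2$, which collapses to $2$ when $t = n$, so each is a path on two vertices. Equivalently, I would verify this directly: by Lemma \ref{T3} the $n$-graph has $|E| = 4(n-n) + 2 = 2$ edges, and since $\ord(a)=2$ makes $n$ the diameter of the metric, a short check shows the only pairs realising distance $n$ are $\{1, ab^{n-1}\}$ and $\{a, b^{n-1}\}$, each an edge joining two otherwise-isolated vertices; the remaining $2n-4$ vertices are isolated, again giving $2 + (2n-4) = 2(n-1)$ components.

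The argument is essentially bookkeeping, so there is no serious obstacle. The only points demanding care are confirming that $t = n$ indeed falls inside the range $1 \leq s \leq n - r$ and keeping the two parity computations straight; the cleanest presentation is to note that the expressions $4(s-1)+2$ and $4s$ both reduce to $2(n-1)$ once $s = n - \lceil n/2 \rceil$ is substituted.
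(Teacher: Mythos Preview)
Your argument is correct. The paper, however, bypasses Theorem \ref{T5} entirely and gives the short direct computation you sketch at the end of your proposal: from Lemma \ref{T3} one gets $|E| = 4(n-n)+2 = 2$, the two edges are identified explicitly as $\{a, b^{n-1}\}$ and $\{1, ab^{n-1}\}$, and the remaining $2n-4$ vertices are isolated, yielding $2(n-1)$ components. Your primary route through Theorem \ref{T5}(3) is a genuinely different (though equally valid) derivation; it has the virtue of exhibiting this corollary as the extremal case $s = n-r$ of the general formula, while the paper's approach is self-contained and avoids the parity split. Since your ``equivalently'' paragraph already reproduces the paper's proof almost verbatim, you have in effect given both arguments.
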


\begin{proof}
	Let $\G = (D_n,E)$ be the $n$-graph of $D_n$. From Lemma  \ref{T3} it follows  that $	|E| = 2$. Note that 
	\begin{equation}
		\{a,  b^{n-1}\},\ 	\{ab^{n-1}, 1\}\in E.
	\end{equation}
	The other $2n-4$ elements of $D_n$ are isolated points, and the proof is complete.
\end{proof}

\section{Some conjectures}
Some open questions are presented below as conjectures.

\begin{conjeture}
	With respect to Theorem \ref{T2}, if $m$ is an even number and $t\leq r$ it follows that the two connected components of the $t$-graph $\G$ are isomorphic.
\end{conjeture}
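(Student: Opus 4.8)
The plan is to produce an explicit automorphism of $\G$ that interchanges the two parity classes, thereby generalizing the isometry $f$ of Lemma \ref{T4} (the case $m=2$ coming from a dihedral group) to an arbitrary even $m$. Note first that the hypothesis implicitly concerns the case of \emph{even} $t$: by Theorem \ref{T2} the graph is connected when $t$ is odd, so the phrase ``the two connected components'' only makes sense for $t$ even. For such $t$ every edge $\{a^ib^j,a^kb^l\}$ of $\G$ satisfies $(i+j)-(k+l)=(i-k)+(j-l)\equiv t\equiv 0\pmod 2$, so its endpoints share the same parity of $i+j$; hence the components $\C_1,\C_2$ of Theorem \ref{T2}(1) are exactly the subgraphs induced on
\[
V_1=\{a^ib^j\mid i+j\equiv 0\bmod 2\},\qquad V_2=\{a^ib^j\mid i+j\equiv 1\bmod 2\}.
\]
It therefore suffices to find a $d_1$-isometry of $G$ carrying $V_1$ onto $V_2$.

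I would define $\phi\colon G\to G$ on normal forms (with $0\le i\le m-1$ and $0\le j\le n-1$) by $\phi(a^ib^j)=a^{\,m-1-i}b^j$. Since $0\le m-1-i\le m-1$, the right-hand side is again a legitimate normal form, and uniqueness of the representation \eqref{escritura} makes $\phi$ a well-defined involution, in particular a bijection of the vertex set; for $m=2$ it coincides with the map $f$ of Lemma \ref{T4}. The reflection $i\mapsto m-1-i$ preserves absolute differences, so for $x=a^ib^j$ and $y=a^kb^l$,
\[
d_1(\phi(x),\phi(y))=|(m-1-i)-(m-1-k)|+|j-l|=|i-k|+|j-l|=d_1(x,y),
\]
and hence $\phi$ preserves the adjacency relation $d_1=t$; that is, $\phi$ is an automorphism of $\G$.

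The evenness of $m$ enters exactly once. As $m$ is even, $m-1$ is odd, so $(m-1-i)+j\equiv(i+j)+1\pmod 2$; thus $\phi$ reverses the parity of $i+j$, giving $\phi(V_1)=V_2$ and $\phi(V_2)=V_1$. Being a graph automorphism that swaps the two vertex classes, $\phi$ restricts to an isomorphism of the subgraph induced on $V_1$ onto the subgraph induced on $V_2$, i.e. $\C_1\cong\C_2$, which is the assertion.

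I do not expect a serious obstacle in the stated regime ($m$ even, $t\le r$): because $t$-graph adjacency depends only on the $d_1$-distance between normal-form exponents, any exponent-level symmetry preserving $d_1$ is automatically a graph automorphism, and the sole piece of insight is to reflect in the coordinate whose index range has even length. The hypothesis is moreover sharp. If $m$ and $n$ are both odd then $mn$ is odd and $|V_1|=(mn+1)/2\neq(mn-1)/2=|V_2|$, so the components cannot be isomorphic; when instead $m$ is odd but $n$ even one simply reflects $j\mapsto n-1-j$. The condition $t\le r$ is used only to guarantee that there are exactly two components (Theorem \ref{T2}), so that ``the two components'' is meaningful; the genuinely hard and still-open behaviour lies in the unstable range $t>r$ of Theorem \ref{T5}, where the number of components grows and a single reflection no longer captures the symmetry.
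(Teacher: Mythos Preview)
The statement you address is listed in the paper as an open \emph{conjecture}; the paper gives no proof for it. There is therefore nothing to compare your argument against directly. What the paper \emph{does} prove is the special case $m=2$ (Theorem~\ref{T5}(1)), using the explicit isometry $f$ of Lemma~\ref{T4} that swaps the two rows of the dihedral grid.

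Your argument is correct and is precisely the natural extension of that dihedral argument: you replace the row-swap $i\mapsto 1-i$ by the reflection $i\mapsto m-1-i$, observe that it is still a $d_1$-isometry (hence a graph automorphism of $\G$), and use the evenness of $m$ to see that it reverses the parity of $i+j$ and therefore interchanges the parity classes $V_1$ and $V_2$ of Theorem~\ref{T2}(1). Your remark that for $m=2$ the map $\phi$ specialises to the $f$ of Lemma~\ref{T4} is accurate, and your sharpness observation (both $m,n$ odd forces $|V_1|\neq|V_2|$; $m$ odd, $n$ even is handled by reflecting the $j$-coordinate instead) is a nice addition that the paper does not contain. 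In short, your proposal settles the conjecture by the same mechanism the authors already used in the dihedral case but apparently did not see how to generalise.
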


\begin{conjeture}
	Is it possible to characterise the $t$-graphs on 2-generator groups, when $t> r$, and $r$ as in Theorem \ref{T2}?
\end{conjeture}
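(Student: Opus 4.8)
The plan is to first strip the problem of its group-theoretic content and reduce it to a purely geometric one. By the observation made before \eqref{2-gen}, the $t$-graph $\G$ of any $G$ in the form \eqref{2-gen} is isomorphic to the graph $\Gamma_{m,n}^{t}$ whose vertex set is the rectangle $R = \{0,\dots,m-1\}\times\{0,\dots,n-1\}\subset\Z^2$ and in which $(i,j)$ and $(k,l)$ are adjacent exactly when $|i-k|+|j-l| = t$; that is, $\Gamma_{m,n}^t$ is the $\ell^1$-sphere-of-radius-$t$ graph on $R$. Thus it suffices to treat the representatives $\Z_m\times\Z_n$, and all questions about $k(\G)$, bipartiteness and the shape of the components become questions about this lattice rectangle. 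I would carry along two structural features throughout. The first is the parity bipartition already exploited in Theorem \ref{T2}: when $t$ is even $\Gamma_{m,n}^t$ splits as the disjoint union of the subgraphs induced on $\{i+j \text{ even}\}$ and $\{i+j \text{ odd}\}$, while for odd $t$ it is bipartite with these two classes as parts. The second is the symmetry group: every map $(i,j)\mapsto(\sigma(i),\tau(j))$ with $\sigma\in\{\mathrm{id},\,i\mapsto m-1-i\}$ and $\tau\in\{\mathrm{id},\,j\mapsto n-1-j\}$ is an $\ell^1$-isometry of $R$ (and, if $m=n$, so is the transpose), hence a graph automorphism; this Klein four-group --- the analogue for general $m$ of the involution $f$ of Lemma \ref{T4} --- acts on the set of connected components and forces them to occur in isomorphic orbits.

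The computational core consists of generalising Lemma \ref{T3} and computing the degree sequence. Writing $p_0(x) = x$ and $p_s(x) = 2(x-s)$ for $1\le s\le x-1$ (and $p_s(x)=0$ otherwise), a routine count of ordered pairs at distance $t$ gives
\[
	2|E| \;=\; \sum_{s=0}^{t} p_s(m)\,p_{t-s}(n),
\]
which specialises to $|E| = 4(n-t)+2$ when $m=2$, recovering Lemma \ref{T3}. Similarly the degree of $(i,j)$ is a sum over the same decomposition $t = s+(t-s)$ of the numbers of admissible horizontal and vertical steps, from which the isolated vertices (degree $0$) and the leaves (degree $1$) are read off explicitly. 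With the edge count, the vertex count and the number of isolated points in hand, Theorem \ref{T1} (or simply the identity $k(\G) = \#\{\text{isolated points}\} + \#\{\text{non-isolated components}\}$) reduces the whole problem to controlling the non-isolated components.

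My proposed characterisation is then: for $t$ just beyond $r$ the non-isolated part of $\Gamma_{m,n}^t$ consists of a single orbit under the Klein four-group --- so at most four mutually isomorphic components --- each of which is a path or a cycle, together with a number of isolated points that grows linearly in $t$ with slope $2\min(m,n)$; this is exactly the increment of $4$ for $m=2$ and $6$ for $m=3$ read off from Tables \ref{TF1} and \ref{TF2}. The steps would be: (i) show, by the degree computation, that away from the isolated points every vertex has degree $1$ or $2$, so each non-isolated component is a path or a cycle; (ii) use the edge and vertex counts together with the symmetry action to pin down how many such components there are and whether they close into cycles; (iii) assemble $k(\G)$ and convert $t = r+s$ into a closed form, exactly as in Theorem \ref{T5}. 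A convenient device for step (i) is the diagonal change of coordinates $u = i+j$, $v = i-j$, under which $d_1$ becomes the Chebyshev distance $\max(|\Delta u|,|\Delta v|)$ and the $\ell^1$-spheres straighten into axis-aligned squares.

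The hard part is that this regular, slope-$2\min(m,n)$ regime does not persist for all $t>r$: the tables explicitly show that past a further threshold the number of components loses any definite pattern. The obstruction is number-theoretic. Whether the extremal diagonals of the $\ell^1$-sphere still fit inside the rectangle, and whether the walks making up a component reconnect into a cycle or terminate as a path, depends on the relative sizes and on divisibility relations among $t$, $m$ and $n$ (for instance on how $t$ compares with $\max(m,n)$ and on parities and greatest common divisors), and these coincidences make the degree-$2$ regime break down sporadically, producing extra short components. Consequently I expect that a single closed formula valid for every $t>r$ is not attainable, and that the realistic form of an answer to the conjecture is a piecewise characterisation: a clean formula of the above type on the initial interval $r < t \le \max(m,n)$, handled by the symmetry-plus-counting argument, followed by a finite case analysis for the remaining large values of $t$, where the degree formula localises the sporadic components.
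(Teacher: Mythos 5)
First, note that this ``statement'' is posed in the paper as an open question, not a theorem: the paper offers no proof to compare against, and its only solved case is $m=2$ (Theorem \ref{T5}). Your proposal is therefore a research programme rather than a proof, and should be judged on whether its announced steps would actually go through. The reduction to the lattice-rectangle graph $\Gamma_{m,n}^t$, the parity splitting, the Klein four-group of isometries generalising Lemma \ref{T4}, and the edge-count identity $2|E|=\sum_{s=0}^{t}p_s(m)p_{t-s}(n)$ (which does correctly specialise to Lemma \ref{T3} when $m=2$) are all sound and are the natural scaffolding. But your step (i) --- that for $t$ just beyond $r$ every non-isolated vertex has degree $1$ or $2$, so that the non-isolated components are paths or cycles --- is false for every $m\geq 3$, and it is the load-bearing step of the whole plan. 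Concretely, take $m=3$, $n=9$, so $r=\left\lceil\frac{m+n-2}{2}\right\rceil=5$, and $t=r+1=6$: the corner vertex $(0,0)$ is adjacent to $(0,6)$, $(1,5)$ and $(2,4)$, hence has degree $3$. The same happens already in the smallest case $m=3$, $n=5$, $t=4=r+1$, and it worsens with $m$: in $\Z_5\times\Z_5$ with $t=5=r+1$ the corner $(0,0)$ has degree $4$. The degree-$\leq 2$ phenomenon that makes Theorem \ref{T5} work is an artefact of $m=2$, where $|i-k|\in\{0,1\}$ leaves at most two admissible steps from a generic vertex; it does not survive even one unit past $r$ once $m\geq 3$, so ``each component is a path or a cycle'' and the consequent vertex/edge bookkeeping of your steps (ii)--(iii) collapse.

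Two smaller inaccuracies compound this. Your claim that the tables show the component count ``loses any definite pattern'' past a further threshold misreads them: Tables \ref{TF1} and \ref{TF2} exhibit clean arithmetic progressions (increment $4$ for $m=2$, increment $6$ for $m=3$) throughout the range $t>r$; the irregularity is only in where the progression starts, depending on parities. And your structural prediction --- one Klein-orbit of at most four mutually isomorphic path/cycle components plus isolated points --- is already inconsistent with entries such as $k(\G)=3$ at $(n,t)=(5,4)$ and $(9,6)$ in Table \ref{TF2}, where the non-isolated part contains a degree-$3$ branch vertex, i.e.\ a component that is neither a path nor a cycle. A viable attack would have to replace step (i) by an analysis of the branch vertices (which your own degree formula can locate: they concentrate near the corners, where the $\ell^1$-sphere meets two sides of the rectangle) and then show how the trees/theta-shaped components they create still assemble into the observed progression; as written, your plan proves the characterisation only in the case $m=2$ that the paper has already settled.
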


\begin{conjeture}
	Let $n\geq 2$, and $r$ as in Theorem \ref{T5}. Then the $t$-graph of $D_n$ is 3-chromatic if $t\leq r$ and $t$ is an even number. 
\end{conjeture}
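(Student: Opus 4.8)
The plan is to reduce the problem to a one–dimensional \emph{distance graph} and then bound the chromatic number from above and below separately.

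First I would invoke Theorem \ref{T5}(1): since $t$ is even and $t\le r$, the graph $\G$ has exactly two connected components $\C_1$ and $\C_2$, and they are isomorphic. Hence $\chi(\G)=\chi(\C_1)$, and it suffices to work with $\C_1$, the component on $V_1=\{a^ib^j\mid i+j\equiv 0\bmod 2\}$. The key observation is that $V_1$ is in bijection with $\{0,1,\dots,n-1\}$ via $j\mapsto w_j$, where $w_j=b^j$ when $j$ is even and $w_j=ab^j$ when $j$ is odd (in both cases $i+j$ is even). Under this relabelling a short parity check shows that an edge of $\C_1$ joins $w_j$ and $w_{j'}$ exactly when $|j-j'|\in\{t-1,t\}$: two vertices on the same row ($j,j'$ of equal parity) are adjacent iff $|j-j'|=t$, while two vertices on different rows ($j,j'$ of opposite parity) are adjacent iff $|j-j'|=t-1$; since $t$ is even the two cases never overlap. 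Thus $\C_1$ is isomorphic to the graph $\Gamma$ on $\{0,\dots,n-1\}$ whose edges are the pairs at distance $t-1$ and $t$, and the whole theorem reduces to showing $\chi(\Gamma)=3$.

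For the upper bound I would exhibit the explicit $3$-colouring $c(j)=\big\lfloor j/(t-1)\big\rfloor \bmod 3$. A distance–$(t-1)$ edge raises $\lfloor j/(t-1)\rfloor$ by exactly $1$, so its endpoints receive colours differing by $1\pmod 3$; a distance–$t$ edge raises it by $1$ or $2$ (according to whether $j$ crosses a multiple of $t-1$), so its endpoints differ by $1$ or $2\pmod 3$. In either case the colours are distinct, so $c$ is proper and $\chi(\Gamma)\le 3$. For the lower bound I would produce an odd cycle, which forces $\chi(\Gamma)\ge 3$. Using the two step lengths, consider the closed walk
\begin{equation*}
	0,\ t,\ 1,\ t+1,\ 2,\ t+2,\ \dots,\ t-2,\ 2t-2,\ t-1,\ 0,
\end{equation*}
that is, the sequence $v_{2k}=k$ for $0\le k\le t-1$ and $v_{2k+1}=t+k$ for $0\le k\le t-2$. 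Consecutive terms differ by $t$ (from $v_{2k}$ to $v_{2k+1}$) or by $t-1$ (from $v_{2k+1}$ to $v_{2k+2}$, and on the closing edge $t-1\to 0$), so every step is an edge of $\Gamma$; the $2t-1$ vertices are precisely $\{0,1,\dots,2t-2\}$ and are pairwise distinct, so this is a cycle of odd length $2t-1$. It lies inside $\{0,\dots,n-1\}$ because the hypothesis $t\le r=\lceil n/2\rceil$ gives $2t\le n+1$, i.e. $2t-2\le n-1$. Combining the two bounds yields $\chi(\Gamma)=3$, and hence $\chi(\G)=3$. (For $n=2$ we have $r=1$ and there is no even $t\le r$, so the statement is vacuous; thus we may assume $n\ge 3$, whence $t\ge 2$ and the cycle length $2t-1\ge 3$.)

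The main obstacle is really the first step: spotting the relabelling $j\mapsto w_j$ that collapses the awkward two–row adjacency—with its three separate edge types, according to whether $0$, one, or both endpoints lie on the $a$–row—into a single distance graph with the two coprime step lengths $t-1$ and $t$. Once that reformulation is in place, both the $3$–colouring and the odd cycle become transparent; the only point needing genuine care is the parity bookkeeping justifying the equivalence ``$\{w_j,w_{j'}\}$ is an edge $\iff |j-j'|\in\{t-1,t\}$'', where one must use that $t$ is even, so that same–parity indices can only sit at the even distance $t$ and opposite–parity indices only at the odd distance $t-1$.
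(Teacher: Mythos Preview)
The paper does \emph{not} prove this statement --- it is listed explicitly as Conjecture~3 in the final section, so there is no original argument to compare against. Your proposal therefore goes beyond the paper, and as far as I can see it constitutes a complete and correct proof of the conjecture.

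The key idea --- relabelling the $n$ vertices of the component $\C_1$ by the single index $j\in\{0,\dots,n-1\}$ via $w_j=a^{j\bmod 2}b^j$, so that $\C_1$ becomes the integer distance graph with allowed gaps $\{t-1,t\}$ --- is exactly the kind of reduction the paper is missing. Your parity bookkeeping is correct: since $t$ is even, a gap of $t$ forces equal parity of $j,j'$ (hence same row, $|i-i'|=0$) and a gap of $t-1$ forces opposite parity (hence $|i-i'|=1$), so the correspondence is an honest graph isomorphism. The block colouring $c(j)=\lfloor j/(t-1)\rfloor\bmod 3$ is proper for the reasons you give (the floor increases by exactly~$1$ across a $(t-1)$-step and by $1$ or $2$ across a $t$-step), and the closed walk $0,t,1,t+1,\dots,t-2,2t-2,t-1,0$ is indeed a $(2t-1)$-cycle sitting inside $\{0,\dots,n-1\}$ because $t\le\lceil n/2\rceil$ gives $2t-2\le n-1$. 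Together these yield $\chi(\C_1)=3$, and by Theorem~\ref{T5}(1) the second component is isomorphic, so $\chi(\G)=3$.

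The only cosmetic point: you might state explicitly that $t\ge 2$ follows already from $t$ being a positive even integer (your remark about $n=2$ being vacuous is correct but somewhat tangential to ensuring $t-1\ge 1$).
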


\begin{conjeture}
	Is it possible to generalise a version of Theorem \ref{T2} for an $n$-generator group for $n$ an arbitrary natural number?
\end{conjeture}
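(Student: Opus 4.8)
The statement is posed as an open question, so a proof proposal must first isolate the right generalisation and then the mechanism that would establish it. The plan is to pass to the integer-grid model, push the parity argument of Theorem \ref{T2} as far as it goes, and then confront connectivity, which is where the genuinely new difficulty lives. First I would fix the framework: let $G=\langle g_1,\ldots,g_n\rangle$ with every element uniquely written as in \eqref{escritura}, $0\le \epsilon_i\le m_i-1$ with $m_i\le \ord(g_i)$. Then the vertex set of the $t$-graph $\G$ is in bijection with the box $B=\prod_{i=1}^n\{0,\ldots,m_i-1\}\subset\Z^n$, and by \eqref{d1} two vertices are adjacent exactly when their coordinate vectors are at $\ell_1$-distance $t$. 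Thus the entire problem reduces to the combinatorics of the distance-$t$ graph of $B$ under the taxicab metric, and, as in the two-generator case, the group structure disappears into the normal form.

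Second, I would record the parity dichotomy. Writing $\sigma(x)=\sum_i\epsilon_i \bmod 2$ for the coordinate-sum parity, the identity $|\epsilon_i-\delta_i|\equiv \epsilon_i-\delta_i \pmod 2$ forces every edge of $\G$ to satisfy $\sigma(x)-\sigma(y)\equiv t\pmod 2$. Hence if $t$ is even every edge preserves $\sigma$, so $V_1=\sigma^{-1}(0)$ and $V_2=\sigma^{-1}(1)$ split the graph and $k(\G)\ge 2$ whenever both classes are nonempty; if $t$ is odd the same partition is a bipartition, generalising Corollary \ref{T6} verbatim. Next I would generalise the no-isolated-points lemma: a vertex $x=(\epsilon_1,\ldots,\epsilon_n)$ has a neighbour iff $t\le \sum_i\max(\epsilon_i,m_i-1-\epsilon_i)$, and minimising the right side over $B$ (the most central vertex) yields the sharp threshold $t\le T_0:=\sum_{i=1}^n\lceil (m_i-1)/2\rceil$, which for $n=2$ specialises to $\lceil (m-1)/2\rceil+\lceil (n-1)/2\rceil$, the quantity governing Theorem \ref{T2}. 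Below $T_0$ the graph has no isolated points.

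The main work, and the main obstacle, is upgrading ``no isolated points'' to ``the parity classes are exactly the components'': each class connected when $t$ is even, the whole graph connected when $t$ is odd. I would prove this by composing two $t$-steps. The achievable net displacements $w_1+w_2$ with $\|w_1\|_1=\|w_2\|_1=t$ are precisely the even-coordinate-sum vectors of $\ell_1$-norm at most $2t$, and in particular the generators $\pm 2e_i$ and $\pm e_i\pm e_j$ of the sublattice $\{v:\sum v_i\equiv 0\}$; for instance $2e_1=(e_1+(t-1)e_2)+(e_1-(t-1)e_2)$ and $e_1+e_2=(t\,e_2)+(e_1-(t-1)e_2)$. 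Since these generators move a vertex to a same-parity vertex reachable by two edges, connectivity of each class would follow once every such two-step path can be routed through an intermediate vertex lying inside $B$. This confinement is exactly where the higher-dimensional geometry bites: the detour has size about $t$ in directions transverse to the target, so it requires the box to be thick enough there. When some $m_i$ is small (the thin-box case, e.g. the $\Z_2$ factor of a dihedral group), the intermediate vertex can be forced out of $B$, and the clean dichotomy may only hold below a strictly stronger bound than $T_0$. I therefore expect the honest generalisation of Theorem \ref{T2} to read: for $t$ below an explicit dimension-dependent bound $T(m_1,\ldots,m_n)\le T_0$, one has $k(\G)=2$ for $t$ even and $\G$ connected for $t$ odd, with the determination of the sharp $T$ being the real content.

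As a fallback, should the two-step routing prove unwieldy, I would attempt an induction on $n$. Slicing $B$ along the last coordinate exhibits $\G$ as $m_n$ copies of lower-dimensional distance-$(t-s)$ graphs glued by cross-slice edges, and the inductive hypothesis together with the gluing edges (which exist precisely under the threshold) would force the slices to amalgamate into at most two parity classes. The obstruction to this route is that the transverse sub-distances $t-s$ range over several values and may individually fall outside the inductive range, so the induction must be carried out for the whole family of distance-$t$ graphs simultaneously rather than for a single fixed $t$.
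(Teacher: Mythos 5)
This statement is one of the paper's open conjectures: the paper supplies no proof, so your proposal can only be judged on its own terms. The framework you set up is correct and is the right one: the reduction to the $\ell_1$-distance-$t$ graph on the box $B=\prod_i\{0,\dots,m_i-1\}$, the parity observation (every edge shifts the coordinate-sum parity by $t \bmod 2$, giving at least two components for even $t$ and a bipartition for odd $t$, generalising Corollary \ref{T6}), and the no-isolated-points threshold $T_0=\sum_i\lceil (m_i-1)/2\rceil$ are all sound. One small correction: $T_0$ does \emph{not} specialise to the paper's quantity $\lceil (m+n-2)/2\rceil$ for $n=2$; when $m$ and $n$ are both even your bound is strictly larger (e.g.\ $m=n=4$ gives $T_0=4$ versus $3$), and your bound is in fact the sharp one, so your lemma silently corrects the paper's two-generator lemma rather than reproducing it.

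The genuine gap is the step you yourself flag: upgrading ``no isolated points'' to ``the parity classes are exactly the components,'' and it is not a routine confinement technicality — the dichotomy you aim at is simply \emph{false} at $T_0$. Take $G=\Z_2\times\Z_2\times\Z_2$, so $B=\{0,1\}^3$ and $T_0=3$. For $t=3$ (odd, $t\leq T_0$) the $t$-graph has no isolated points, yet it is the perfect matching on antipodal pairs $\{000,111\},\{001,110\},\{010,101\},\{011,100\}$: four components, not connected. Your two-step routing collapses in exactly this thin-box regime: from the origin, any decomposition of a target displacement into two norm-$t$ steps needs an intermediate vertex roughly $t-1$ deep in transverse directions, impossible when several $m_i=2$ and $t\geq 3$ (in the cube at $t=3$ the only admissible first step from $000$ is $(1,1,1)$, from which no second step of norm $3$ reaches a same-parity neighbour of the origin). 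So any honest $n$-generator version of Theorem \ref{T2} must come with a threshold $T(m_1,\dots,m_n)$ strictly below $T_0$ in general; you anticipate this but do not determine $T$, nor carry out the routing below any explicit $T$, so what you have is a well-motivated program (with the correct reformulation and a correct negative insight) rather than a proof. The fallback induction on $n$ has the same status: as you note, the slices involve all transverse distances $t-s$ simultaneously, and you provide no mechanism for handling the values that fall outside the inductive range.
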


\end{document}